\documentclass[12pt,a4paper, reqno]{amsart}
\usepackage{amsmath,amsfonts,amssymb,amsthm,amscd}
\usepackage{graphicx}

\topmargin -1cm \headsep 0.5cm \textheight 23cm \textwidth 15cm

\parskip 0.2cm
\parindent 5mm
\oddsidemargin 0.5cm \evensidemargin 0.5cm \unitlength=1cm

\input{comment.sty}
\includecomment{MM}
\includecomment{CL}


\renewcommand{\b}{\beta}
\newcommand{\g}{\gamma}

\renewcommand{\d}{\delta}
\newcommand{\D}{\Delta}

\renewcommand{\l}{\lambda}

\newcommand{\m}{\mu}

\renewcommand{\o}{\omega}

\newcommand{\s}{\sigma}

\renewcommand{\th}{\theta}
\newcommand{\e}{\varepsilon}


\newcommand{\E}{{\mathbb E}}

\newcommand{\R}{{\mathbb R}}

\newcommand{\Pb}{{\mathbb P}}


\newtheorem{theorem}{Theorem}[section]

\newtheorem{lemma}[theorem]{Lemma}
\newtheorem{corollary}[theorem]{Corollary}
\newtheorem{conjecture}[theorem]{Conjecture}

\newtheorem{question}[theorem]{Question}

\theoremstyle{definition}

\newtheorem{notation}[theorem]{Notation}

\theoremstyle{remark}
\newtheorem{remark}[theorem]{Remark}


\begin{document}

\title{Random Dirichlet series arising from records}
\author{Ron Peled, Yuval Peres, Jim Pitman, and Ryokichi Tanaka}

\address{Tel Aviv University, School of Mathematical Sciences, Tel Aviv, 69978, Israel}
\email{peledron@post.tau.ac.il}

\address{Microsoft Research, 1 Microsoft Way, Redmond, WA, 98052, USA}
\email{peres@microsoft.com}

\address{Department of Statistics, University of California Berkeley, Berkeley, CA, 94720, USA}
\email{pitman@stat.berkeley.edu}

\address{Tohoku University, 2-1-1 Katahira, Aoba-ku, Sendai, 980-8577, Japan}
\email{rtanaka@m.tohoku.ac.jp}

\date{\today.\\
Research of R.P.\ is supported by an ISF grant and an IRG grant.\\
Research of R.T.\ is supported by JSPS Grant-in-Aid for Young Scientists (B) 26800029.
}
\begin{abstract}
We study the distributions of the random Dirichlet series with parameters $(s, \b)$ defined by
$$
S=\sum_{n=1}^{\infty}\frac{I_n}{n^s},
$$
where $(I_n)$ is a sequence of independent Bernoulli random variables, $I_n$ taking value $1$ with probability $1/n^\b$ and value $0$ otherwise.
Random series of this type are motivated by the record indicator sequences which have been studied in extreme value theory in statistics.
We show that when $s>0$ and $0< \b \le 1$ with $s+\b>1$ the distribution of $S$ has a density; otherwise it is purely atomic or not defined because of divergence.
In particular, in the case when $s>0$ and $\b=1$, we prove that for every $0<s<1$ the density is bounded and continuous, whereas for every $s>1$ it is unbounded.
In the case when $s>0$ and $0<\b<1$ with $s+\b>1$, the density is smooth.
To show the absolute continuity,
we obtain estimates of the Fourier transforms,
employing van der Corput's method to deal with number-theoretic problems.
We also give further regularity results of the densities,
and present an example of non-atomic singular distribution which is induced by the series restricted to the primes.
\end{abstract}
\maketitle

\section{Introduction}

The purpose of this paper is to analyse a class of probability distributions defined by infinite series of the following type:
Let $I_1, I_2, \dots$ be a sequence of independent random variables $I_n$ taking values $0$ or $1$ with $\Pb\left(I_n=1\right)=1/n$.
Define a random series by
\begin{equation}\label{original}
S_{o}:=\sum_{n=1}^{\infty}\frac{I_n}{n}.
\end{equation}
Note that the series converges almost surely since its expectation is finite, and its distribution has the support $[1, \infty)$.
A central question we consider is whether this distribution has a density or not.
We show this distribution does have a density, but leave open the questions of whether this density is bounded, or continuous.

This distribution arises from the study of records in statistics.
Let $U_1, U_2, \ldots $ be a sequence of independent uniform $[0,1]$ variables, and let
$I_1, I_2, \ldots $ be the associated sequence of {\em record indicators}:
\begin{equation}\label{recinds}
I_n:= 1 ( U_n > U_j \mbox{ for all } 1 \le j < n )
\end{equation}
meaning that $I_n = 1$ if $U_n$ exceeds all previous values, and $I_n = 0$ otherwise.
R\'enyi \cite{R} showed that the record indicators are independent with $\Pb(I_n = 1) = 1/n$ for
all $n \ge 1$. Related properties of the record indicator sequence and its partial sums, counting numbers of
records, have been extensively studied. See e.g.\ the monographs of Arnold et al.\ \cite{ABN}, 
and Nevzorov \cite{N}.
See also \cite[Chapter 3]{P} for related topics and references therein.
We show in Theorem \ref{condexp}
that the conditional expectation of $U_1$ given $I_1, I_2, \dots$ is
\begin{equation}\label{eq2}
\E[U_1 | I_1, I_2, \dots]
= \prod_{n=2}^{\infty}\left(1-\frac{I_n}{n}\right).
\end{equation}
In this setting, the random series (\ref{original}) approximates the logarithm of (\ref{eq2}).

\begin{figure}[h]
	\begin{center}
	\includegraphics[width=145mm]{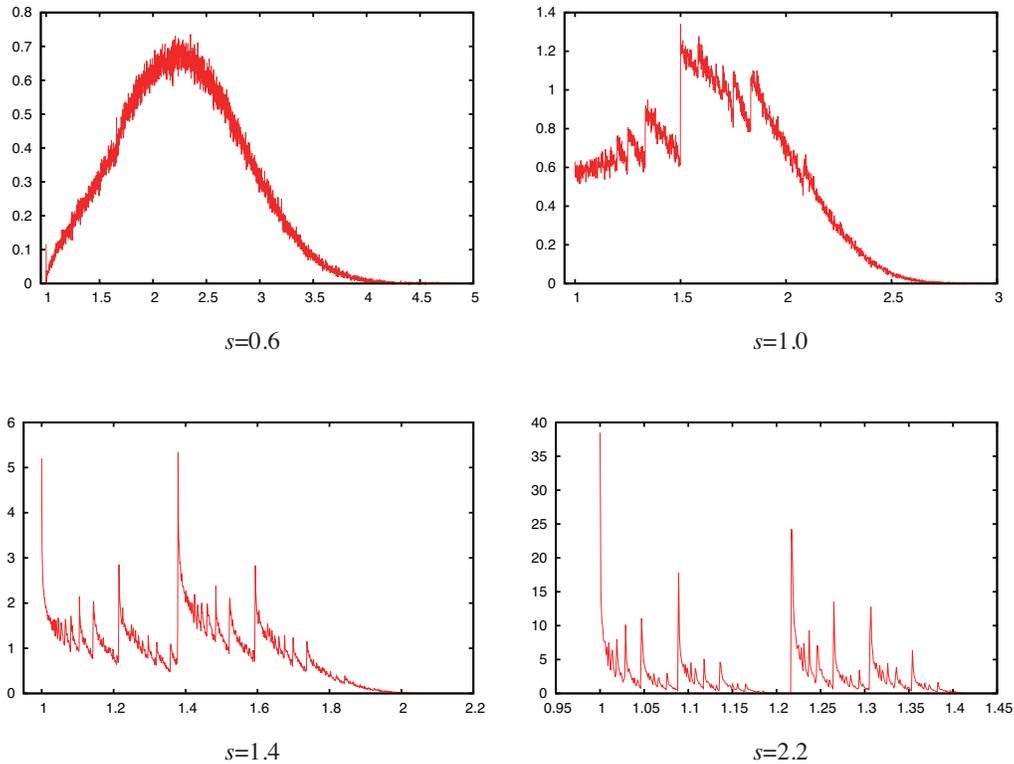}
	\end{center}
	\caption{Simulation approximations to the densities of $
S:=\sum_{n=1}^{N}I_n/n^s$ ($I_n=1$ with probability $1/n$ and $0$ otherwise) for $s=0.6, 1.0, 1.4$ and $2.2$, where $N=10^4$, the number of samples is $10^6$ and the bin size is $10^{-3}$.}
	\label{graphs}
\end{figure}

From the viewpoint of the study of random series,
it is natural to parameterize the series (\ref{original}) as follows:
For $s>0$, let
\begin{equation}\label{var}
S:=\sum_{n=1}^{\infty}\frac{I_n}{n^s},
\end{equation}
where $I_1, I_2, \dots$ is a sequence of independent random variables $I_n$ with values $0$ or $1$ with probability $1-1/n^\b$ or $1/n^\b$, respectively, with $\b$ a positive parameter.  The series converges almost surely if $s+\b > 1$, and just diverges almost surely if $s+\b \le 1$ by Kolmogorov's three-series theorem \cite[Theorem 2.5.4]{D}.
We recover (\ref{original}) when $s=1$ and $\b=1$.
Let $\m$ be the distribution of $S$ defined by (\ref{var}).
Jessen and Wintner showed that every convergent infinite convolution of discrete measures is of {\it pure type}: 
it is either atomic singular (purely discontinuous), non-atomic singular (continuous singular), or absolutely continuous with respect to Lebesgue measure \cite{JW}.
We observe that if $\b > 1$, then the sequence $(I_n)$ consists of only finitely many ones almost surely, hence $\m$ is atomic singular, and in fact, it is supported on the countable set of all possible values of the finite sums $\sum \e_n/n^s$, where $\e_n$ is $0$ or $1$.

\begin{theorem}\label{main}
Let $s>0$ and $0< \b \le 1$ with $s + \b > 1$.
	\begin{itemize}
	\item[(1)] For $\b=1$,
the distribution $\m$ is absolutely continuous with respect to Lebesgue measure for all $s > 0$.
	For every $0 < s < 1$, the distribution $\m$ has a bounded continuous density, whereas for every $s>1$, it has an unbounded density.
	Moreover, for each $s >0$ the Fourier transform $\hat \m$ of $\m$ has the following property: for every small enough $\e>0$ there exists a constant $C_{\e,s} > 0$ such that for all real $t$,
	\begin{equation}\label{eqthm1}
		|\hat \m(t)| \le C_{\e,s} |t|^{-\frac{1}{s}+\e}.
	\end{equation}
	\item[(2)]
For $0 < \b < 1$, for every $s> 1 - \b$,
	the distribution $\m$ has a smooth density.
	Moreover,
	there exist constants $C_{\b,s}>0$ and $T>0$ such that for all real $t$ with $|t| \ge T$,
	\begin{equation*}
		|\hat \m(t)| \le \exp\left(-C_{\b,s} |t|^{\frac{1-\b}{s+1}}\right).
	\end{equation*}
	\end{itemize}
\end{theorem}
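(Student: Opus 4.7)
The natural tool is the characteristic function
\[
\hat\mu(t) = \prod_{n=1}^\infty\bigl(1 - n^{-\beta} + n^{-\beta} e^{it/n^s}\bigr).
\]
Using $|1+z|^2 = 1 + 2\,\mathrm{Re}\,z + |z|^2$ with $z = n^{-\beta}(e^{it/n^s}-1)$,
\[
|\hat\mu(t)|^2 = \prod_n\bigl(1 - 2 n^{-\beta}(1-n^{-\beta})(1-\cos(t/n^s))\bigr) \le \exp\bigl(-2\Sigma(t)\bigr),\quad
\Sigma(t) := \sum_{n\ge 1}\frac{(1-n^{-\beta})(1-\cos(t/n^s))}{n^\beta}.
\]
The heart of the argument is a sharp lower bound on $\Sigma(t)$ as $|t|\to\infty$; once this is in hand, all stated consequences follow from Fourier inversion and a few standard arguments.

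\textbf{Lower bound via van der Corput.} Split the sum at the critical scale $n\sim|t|^{1/s}$. For $n$ above the threshold, $1-\cos x\le x^2/2$ makes the contribution $O(1)$. For $n\le|t|^{1/s}$, write $\Sigma(t)$ as the ``mean'' $\sum n^{-\beta}$ minus the oscillatory $\sum\cos(t/n^s)/n^\beta$ (up to a bounded correction from the $-n^{-2\beta}$ factor). The oscillatory sum is where number theory enters: I apply van der Corput's second-derivative test to $\sum_{n\in[N,2N]}e^{it/n^s}$, noting that $f(x)=t/x^s$ has $|f''(x)|\asymp t/x^{s+2}$, which yields a bound of order $t^{1/2}N^{-s/2}+t^{-1/2}N^{(s+2)/2}$; Abel summation transfers this to the weighted sum, which one sums dyadically over $N\in[1,|t|^{1/s}]$. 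For $\beta=1$, the mean equals $(1/s)\log|t|+O(1)$ and the oscillatory contribution is $o(\log|t|)$, giving $\Sigma(t)\ge(1/s-\varepsilon)\log|t|$; for $\beta<1$, the mean grows like $|t|^{(1-\beta)/s}$ while van der Corput extracts only the weaker rate $|t|^{(1-\beta)/(s+1)}$, since the two error terms balance at $N=|t|^{1/(s+1)}$.

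\textbf{Consequences of the Fourier decay.} Exponentiating gives (1) $|\hat\mu(t)|\le C_\varepsilon|t|^{-1/s+\varepsilon}$ and (2) $|\hat\mu(t)|\le\exp(-C|t|^{(1-\beta)/(s+1)})$. In part~(2), stretched-exponential decay places $(1+|t|)^k\hat\mu\in L^1(\R)$ for every $k$, so Fourier inversion produces a $C^\infty$ density. In part~(1) with $0<s<1$, the exponent $1/s-\varepsilon>1$ makes $\hat\mu\in L^1$, and Fourier inversion gives a bounded continuous density. For $s\ge 1$ one has $\hat\mu\in L^2$ whenever $s<2$ (giving an $L^2$ density and hence absolute continuity); for $s\ge 2$ I would supplement with a convolution decomposition $\mu=\mu_A*\mu_{A^c}$ along a subsequence $A\subset\N$ for which $\mu_A$ has already been shown absolutely continuous, so that the density transfers to $\mu$. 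For the unboundedness at $s>1$: the event $A_N:=\{I_n=0:2\le n\le N\}$ has probability $\prod_{n=2}^N(1-1/n)=1/N$; on $A_N$ the tail $T_N:=\sum_{n>N}I_n/n^s$ has $\E\,T_N\asymp N^{-s}$ and $\mathrm{Var}\,T_N\asymp N^{-2s}$, so by Chebyshev $T_N\le CN^{-s}$ with probability at least $1/2$, yielding $\mu([1,1+CN^{-s}])\gtrsim 1/N$ and hence a density $\gtrsim N^{s-1}\to\infty$ as $N\to\infty$.

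\textbf{Main obstacle.} The most delicate ingredient is the van der Corput analysis: obtaining the sharp constant $1/s$ (for $\beta=1$) and the exponent $(1-\beta)/(s+1)$ (for $\beta<1$) requires carefully balancing the two error terms of the second-derivative test across all dyadic scales $N\in[1,|t|^{1/s}]$ and feeding the estimate through Abel summation against the weight $n^{-\beta}$. A secondary difficulty is completing absolute continuity in part~(1) when $s\ge 2$, where the Fourier decay drops below $|t|^{-1/2}$ and a pure $L^2$ argument no longer suffices; a convolution decomposition along a suitable subsequence is what then saves the day.
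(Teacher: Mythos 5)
Your overall strategy matches the paper's: bound $|\hat\mu(t)|^2$ by a product, reduce to estimating a weighted oscillatory sum $\sum \cos(2\pi t/n^s)/n^\beta$ via van der Corput and Abel summation, and then read off regularity from Fourier decay. The treatment of $\beta<1$ and the unboundedness argument for $s>1$ are essentially the paper's. However, there are two genuine gaps in part~(1).

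\textbf{The exponent $1/s$ cannot be reached with only the second-derivative test.} The bound $|\sum_{n\in[N,2N]}e(f(n))|\lesssim t^{1/2}N^{-s/2}+t^{-1/2}N^{(s+2)/2}$ is useless once $t/N^{s+2}\gtrsim 1$, i.e.\ for $N\lesssim t^{1/(s+2)}$: the first error term is then worse than the trivial bound $N$. You cannot ``sum dyadically over $N\in[1,|t|^{1/s}]$''; the oscillatory sum is only controlled on $[t^{1/(s+2)},t^{1/s}]$. Since each summand of $\Sigma(t)$ is nonnegative you may simply drop the uncontrolled range, but then the ``mean'' you retain is only $(1/s-1/(s+2))\log t = \tfrac{2}{s(s+2)}\log t$, giving $|\hat\mu(t)|\lesssim |t|^{-2/(s(s+2))+\varepsilon}$, which is strictly weaker than $|t|^{-1/s+\varepsilon}$ for every $s>0$. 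The paper circumvents this by invoking the iterated van der Corput estimate (Graham--Kolesnik, Theorem~2.9) with the $(q+2)$-nd derivative test for arbitrary $q$; the controllable range becomes $[t^{1/(q+2+s)},t^{1/s}]$, and letting $q\to\infty$ pushes the lower endpoint to $t^{o(1)}$, recovering the full mean $(1/s-\varepsilon)\log t$. This use of arbitrarily high derivative tests is the missing idea.

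\textbf{Absolute continuity for $s\ge 2$ is not established by your ``convolution decomposition''.} You propose $\mu=\mu_A*\mu_{A^c}$ for some deterministic $A\subset\N$ with $\mu_A$ absolutely continuous, but give no candidate $A$, and in fact none works by a Fourier argument: for any subsequence the decay of $\hat\mu_A$ is at best that of $\hat\mu$ itself (sparsifying only hurts), and $|t|^{-1/s+\varepsilon}$ is not in $L^2$ once $s\ge 2$. The paper instead conditions on a high-probability event $A_m$ in the underlying probability space (not a subset of $\N$): $A_m$ requires that in each dyadic block $(2^k,2^{k+1}]$ with $k$ in a suitable window there be \emph{exactly one} index with $I_n=1$. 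Conditionally, the contribution $Y_k$ of such a block is a single point mass $1/n^s$ with $n$ chosen with weights $\propto 1/(n-1)$, and van der Corput (now only $q=0$ is needed) shows the conditional characteristic function of $Y_k$ is $\le c|t|^{-\delta}$ for $k$ near $\log_2 t^{1/s}$. Since more and more blocks satisfy this as $t\to\infty$, the conditional Fourier transform decays superpolynomially, so the conditional law is smooth; letting $\Pb(A_m)\to 1$ then forces $\mu$ itself to be absolutely continuous. This conditioning device is qualitatively different from a deterministic convolution split and is the second idea you are missing.

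A minor point: your unboundedness argument conditions only on $\{I_n=0,\ 2\le n\le N\}$ and hence proves unboundedness of the density near $x=1$; the paper's Theorem~4.2 proves it near every point of a dense set, but for the statement of Theorem~1.1 your weaker version suffices.
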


In Section \ref{regularity} and Section \ref{ac}, we prove this theorem;
see Theorem \ref{fourier}, Theorem \ref{all-s} and Theorem \ref{unbounded}.
In part 1 of Theorem \ref{main}, we observe a transition of boundedness of the densities on the line $\b=1$ at $s=1$.
This leaves open questions about the density in the critical case when $\b=1$ and $s=1$, which we present in Section \ref{questions}.
Figure \ref{graphs} displays the densities of $\m$ for $\b=1$ with several values of $s$.

Random series have been studied in various contexts.
If one considers the random harmonic series $\sum_{n=1}^{\infty}\pm 1/n$, where the signs are chosen independently with equal probability,
then one can show that the distribution has a smooth density \cite{Sc}.
In the context of random functions, the random series $\sum_{n=1}^{\infty}\pm 1/n^s$ is called a {\em random Dirichlet series} and has been studied mainly by focusing on its analytic properties (e.g., \cite{BM} and references therein).
In most cases, however, it is assumed that the random signs or the random coefficients are independent identically distributed or satisfy certain uniformity.
The random Dirichlet series with coefficients $I_n$ as in (\ref{var}) has not been extensively studied yet.
The random Dirichlet series (\ref{var}) can also be compared with the random geometric series $\sum_{n=1}^{\infty}\pm \l^n$, where the signs are chosen independently with equal probability and $\l$ is a parameter between $0$ and $1$.
This distribution has been studied under the name of Bernoulli convolutions,
and its absolute continuity/singularity problem has attracted a lot of attention.
See the expository article by Peres, Schlag and Solomyak \cite{PSS} and recent notable progress by Hochman \cite{H} and by Shmerkin \cite{S}.

Let us show further regularity results of the densities of $\m$.
The following is an implication of the decay of the Fourier transform in the case when $0<s<2$ and $\b=1$.

\begin{corollary}\label{HausYoung}
	Let $s>0$ and $\b=1$.
	For an integer $r \ge 0$ and for every $0 < s < 1/(r+1)$, the distribution $\m$ has a density in $C^r$, and
	for every $1 \le s < 2$, it has a density in $L^q$ for every $1 \le q < s/(s-1)$, where $s/(s-1)=\infty$ when $s=1$.
\end{corollary}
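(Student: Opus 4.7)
The plan is to deduce both statements from the Fourier-transform decay
\[
|\hat\m(t)| \le C_{\e,s}|t|^{-1/s+\e}
\]
of Theorem \ref{main}(1), via Fourier inversion for the $C^r$ part and via the Hausdorff--Young inequality (together with $L^p$-interpolation) for the $L^q$ part. Throughout, $f$ denotes the density of $\m$, which exists and lies in $L^1(\R)$ by Theorem \ref{main}(1).

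For the $C^r$ claim, suppose $0<s<1/(r+1)$ and pick $\e>0$ small enough that $s<1/(r+1+\e)$; this is possible because $1/s>r+1$. The decay estimate then gives $|t|^r|\hat\m(t)| \le C|t|^{r-1/s+\e}$ with exponent strictly less than $-1$, so $(1+|t|)^r\hat\m(t)\in L^1(\R)$. The inverse Fourier representation of $f$ is thus absolutely convergent and may be differentiated $r$ times under the integral sign; by dominated convergence each derivative is bounded and continuous, so $f\in C^r(\R)$.

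For the $L^q$ claim, first take $q$ with $2\le q<s/(s-1)$ (reading $s/(s-1):=\infty$ at $s=1$) and set $p=q/(q-1)\in(1,2]$; the inequality $q<s/(s-1)$ rearranges to $p>s$. Choose $\e>0$ small enough that $p(1/s-\e)>1$; the decay bound then shows $\hat\m\in L^p(\R)$ by direct integration, and Hausdorff--Young yields $f\in L^{p'}=L^q(\R)$. For the remaining range $1\le q<2$, note that $s/(s-1)\ge 2$ whenever $s\in[1,2)$, so the case $q=2$ already supplies $f\in L^2(\R)$; log-convexity of $L^p$-norms (i.e., H\"older's inequality) between $f\in L^1$ and $f\in L^2$ then covers every $q\in[1,2]$.

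The proof is essentially a bookkeeping exercise translating the Fourier decay rate into Lebesgue exponents; no genuine conceptual obstacle is anticipated beyond verifying the equivalence $q<s/(s-1)\iff p>s$ and selecting $\e$ small enough to make each exponent count strict. If anything is delicate, it is only the observation that because Theorem \ref{main}(1) gives the exponent $-1/s$ for \emph{every} arbitrarily small $\e>0$, the open-endpoint conditions $s<1/(r+1)$ and $q<s/(s-1)$ (rather than the closed ones) are exactly what is needed for the $\e$-slack to be absorbed.
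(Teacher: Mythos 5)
Your proof is correct and follows essentially the same route as the paper: Fourier inversion (with $r$ differentiations under the integral) for the $C^r$ claim, Hausdorff--Young with $p=q/(q-1)$ for $2\le q<s/(s-1)$, and interpolation between $L^1$ and $L^2$ (log-convexity of $L^p$-norms) to cover $1\le q<2$. The paper states these steps more tersely but the structure is identical.
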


Let us mention another implication of the decay of the Fourier transform for $0 < s < 2$ and $\b=1$.
The fractional derivatives are expressed in terms of the $(2, \g)$-Sobolev space $L^2_\g$,
where the norm is defined by
$\|\m\|_{2,\g}^2=\int_{-\infty}^{\infty}|\hat \m(t)|^2|t|^{2\g}dt$.
Finiteness of $\|\m\|_{2,\g}$ for some positive $\g > 0$, implies that $\m$ has $\g$-fractional derivatives in $L^2$.
The following is an immediate consequence of part 1 of Theorem \ref{main}.

\begin{corollary}\label{fracderiv}
Suppose that $0 < s < 2$ and $\b=1$.
For every $0 < \g < \frac{1}{s}-\frac{1}{2}$, the density of $\m$ has $\g$-fractional derivatives in $L^2$.
\end{corollary}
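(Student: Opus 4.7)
The plan is to use the standard Fourier-analytic characterization of the Sobolev space $L^2_\gamma$ together with the Fourier decay estimate (\ref{eqthm1}) from part 1 of Theorem \ref{main}. Specifically, the assertion that the density of $\mu$ has $\gamma$-fractional derivatives in $L^2$ is precisely the finiteness of
\[
\|\mu\|_{2,\gamma}^2 \;=\; \int_{-\infty}^{\infty} |\hat\mu(t)|^2\,|t|^{2\gamma}\,dt,
\]
so the entire corollary reduces to showing that this integral converges under the assumption $0 < \gamma < \tfrac{1}{s} - \tfrac{1}{2}$.

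I would split the integral at $|t| = 1$. On the bounded region $|t| \le 1$, the trivial estimate $|\hat\mu(t)| \le 1$ combined with $\gamma > 0 > -\tfrac{1}{2}$ makes $\int_{-1}^{1}|t|^{2\gamma}\,dt$ finite, so this piece contributes nothing problematic. On the tail $|t| > 1$, I would apply the Fourier bound (\ref{eqthm1}): for any $\e > 0$ there is a constant $C_{\e,s}$ with $|\hat\mu(t)| \le C_{\e,s}|t|^{-1/s+\e}$, so the integrand is dominated by $C_{\e,s}^2\,|t|^{-2/s + 2\e + 2\gamma}$. This is integrable on $\{|t|>1\}$ precisely when
\[
-\tfrac{2}{s} + 2\e + 2\gamma < -1, \qquad \text{i.e.,}\qquad \gamma < \tfrac{1}{s} - \tfrac{1}{2} - \e.
\]
Given that $\gamma < \tfrac{1}{s} - \tfrac{1}{2}$ strictly, one simply chooses $\e$ smaller than the gap $\tfrac{1}{s} - \tfrac{1}{2} - \gamma$, and the tail integral converges.

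There is no real obstacle here; the entire argument is a one-line consequence of (\ref{eqthm1}) once one knows that the $L^2_\gamma$-norm is the Fourier-weighted $L^2$ norm. The only small thing to verify is that the choice of $\e$ can be made uniformly in $\gamma$ for each fixed $\gamma$ satisfying the hypothesis, which is immediate because the hypothesis leaves a positive gap. Consequently, the corollary follows directly from part 1 of Theorem \ref{main}, as claimed in the statement.
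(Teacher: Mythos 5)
Your proof is correct and is exactly the argument the paper has in mind: the paper gives no explicit proof, calling the corollary ``an immediate consequence'' of part 1 of Theorem \ref{main}, and your verification that $\int |\hat\mu(t)|^2|t|^{2\gamma}\,dt<\infty$ via the trivial bound near the origin plus the decay estimate \eqref{eqthm1} with a sufficiently small $\e$ is precisely that intended one-line deduction.
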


In the proof of Theorem \ref{main}, we require an estimate of exponential sums by Weyl and van der Corput (\cite{GK}, \cite{KN})
to bound the Fourier transform of the distribution $\m$.
In the cases when $0<s<2$ and $\b=1$, and when $s>0$ and $0<\b<1$ with $s+\b>1$, respectively,
the decay of the Fourier transform suffices to conclude that the distribution $\m$ has a density,
since $\hat \m$ is in $L^2$ in these parameter sets,
and further regularity results also follow from these estimates.
In the case when $s \ge 2$ and $\b=1$, however, the absolute continuity of the distribution $\m$ does not follow directly from the estimate of the Fourier transform (\ref{eqthm1}) in part 1 of Theorem \ref{main}.
In Section \ref{ac}, we show that $\m$ is absolutely continuous for all $s>0$ and $\b=1$, employing a conditioning argument combined with van der Corput's method.
We remark that the method which we use there does not yield further regularity of the densities for $s\ge2$ and $\b=1$ unlike the case for $0<s<2$ and $\b=1$.
For the sharpness of the estimate of the Fourier transform for $0<s<2$ and $\b=1$, see Remark \ref{sharp}.


To summarize the results, for $(s, \b)$ with $s+\b >1$, the distribution $\m$ of (\ref{var}) is always absolutely continuous except
for the trivial case when $\beta >1$.

We present one more result which provides a non-atomic singular distribution, restricting to the prime numbers sequence:
Let $p$ be a prime.
Consider an independent sequence $(I_p)$ with value $0$ or $1$ with probability $1-1/p$ or $1/p$, respectively, and the following random series:
$$
S_{primes}:=\sum\frac{I_p}{p^s},
$$
where the summation runs over all primes $p$ and $s>0$.
Notice again that $S_{primes}$ is finite almost surely.

\begin{theorem}\label{sing}
For every $s>0$, the distribution $\m$ of $S_{primes}$ is non-atomic singular.
\end{theorem}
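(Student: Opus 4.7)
My plan is to apply the Jessen--Wintner purity law cited in the introduction to reduce the proof to two tasks: verifying that $\m$ has no atoms, and verifying that $\m$ is not absolutely continuous. For the first, I would invoke the standard Jessen--Wintner / L\'evy continuity criterion for infinite Bernoulli convolutions, which ensures no atoms as soon as $\sum_p \min(1/p,\,1-1/p) = \infty$; this is just $\sum_p 1/p = \infty$ (Mertens). So the substantive work is to show $\m$ is not absolutely continuous.

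I would do this by proving the stronger statement $\dim_H(\m) = 0$, using Frostman's energy characterization
\[
\dim_H(\m) = \sup\{\a \ge 0 : E_\a(\m) < \infty\},
\qquad
E_\a(\m) = \iint |x - y|^{-\a}\,d\m(x)\,d\m(y) = \E\bigl[|S - S'|^{-\a}\bigr],
\]
where $S, S'$ are independent copies of $S_{primes}$. A dimension-zero measure on $\R$ is supported on a set of Lebesgue measure $0$ and hence singular; combined with purity and non-atomicity, this gives that $\m$ is non-atomic singular. So it suffices to show $E_\a(\m) = \infty$ for every $\a > 0$.

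The crux is the small-ball lower bound $\Pb(|S - S'| < \e) \ge c\,(\log(1/\e))^{-2}$ for small $\e$. To prove it, I write $S - S' = \sum_p J_p/p^s$ with $J_p := I_p - I_p' \in \{-1, 0, 1\}$ independent and $\Pb(J_p = 0) = 1 - 2/p + 2/p^2$. Fixing a cutoff $P$ and conditioning on the event that $J_p = 0$ for all primes $p \le P$, Mertens' theorem gives
\[
\prod_{p \le P}\!\bigl(1 - 2/p + 2/p^2\bigr) = \exp\!\Bigl(-2\sum_{p \le P} 1/p + O(1)\Bigr) \sim C(\log P)^{-2};
\]
while conditionally $S - S' = \sum_{p > P} J_p/p^s$ has variance $\sum_{p > P} 2(1-1/p)/p^{2s+1} = O(P^{-2s}/\log P)$, so Chebyshev forces $\Pb(|S - S'| < P^{-s}\mid \text{condition}) \ge 1 - O(1/\log P)$. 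Choosing $\e = P^{-s}$ and multiplying yields the claimed bound. Substituting into the layer-cake identity $E_\a(\m) = \a\int_0^\infty \e^{-\a - 1}\,\Pb(|S - S'| < \e)\,d\e$ then forces divergence at $\e \to 0$ for every $\a > 0$, giving $\dim_H(\m) = 0$ uniformly in $s > 0$. The most delicate point is pinning down the exponent $2$ in $(\log P)^{-2}$ (any smaller exponent would not yield divergence of the energy for small $\a$), which hinges on the factor $2$ in $\log(1 - 2/p + 2/p^2) = -2/p + O(1/p^3)$ and on choosing $\e$ at precisely the scale $P^{-s}$ where the conditional tail variance is $o(\e^2)$.
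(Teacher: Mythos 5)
Your reduction to ``non-atomic'' plus ``not absolutely continuous'' via the Jessen--Wintner purity law, and your treatment of the non-atomic part (via $\sum_p 1/p=\infty$), are fine and match the paper. The gap is in the singularity argument, and it is a genuine one: the Frostman energy criterion points in the \emph{wrong direction} for your purpose. What Frostman gives is that if $E_\a(\m)<\infty$ then every set of positive $\m$-measure has Hausdorff dimension at least $\a$; equivalently, one only has the one-sided inequality
\[
\inf\{\dim_H(A): \m(A)>0\}\ \ge\ \sup\{\a\ge 0: E_\a(\m)<\infty\},
\]
\emph{not} the equality you invoke. Knowing $E_\a(\m)=\infty$ for every $\a>0$ therefore gives no upper bound on the dimension of $\m$ and certainly does not imply that $\m$ sits on a Lebesgue-null set. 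Indeed, there exist absolutely continuous probability measures on $[0,1]$ with $E_\a(\m)=\infty$ for every $\a>0$: take $d\m=f\,dx$ where $f$ equals $2^{a_n-n}$ on disjoint intervals $I_n$ of length $2^{-a_n}$ with $\m(I_n)=2^{-n}$, and choose $a_n$ to grow super-exponentially, say $a_n=2^n$. A direct computation gives $\int\m([x-\e,x+\e])\,d\m(x)\gtrsim(\log(1/\e))^{-2}$ for this $\m$, so it satisfies \emph{exactly} the small-ball lower bound you derive for $S_{primes}$, yet is absolutely continuous. This shows that your small-ball estimate, while correct (the Mertens asymptotics, the conditional Chebyshev step, and the choice $\e=P^{-s}$ all check out), is simply too weak to conclude singularity: a lower bound on the averaged quantity $\Pb(|S-S'|<\e)=\int\m([x-\e,x+\e])\,d\m(x)$ can be driven by a few bad locations and is compatible with having a density.

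What the paper does instead, and what you would need, is a \emph{pointwise} concentration statement: a family of sets $B_N$ with $\mathrm{Leb}(B_N)\to 0$ but $\m(B_N)$ bounded below by a positive constant independent of $N$. The paper achieves this by splitting $S=S_1+S_2+S_3$ at cutoffs $N^\e$ and $N$, using Mertens to show $\Pb(S_2=0)\gtrsim\e$, Markov to localize $S_3$ to an interval of length $O(N^{-s})$, and --- this is the crucial step that your approach has no analogue of --- restricting attention to configurations of $(I_p)_{p<N^\e}$ indexed by square-free integers $m<N^\e$, so that $S_1$ lands on one of only $O(N^\e)$ prescribed points $x_m$ with total probability bounded below (again by Mertens and the density of square-free integers). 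The resulting $B_N$ (a union of $O(N^\e)$ intervals of length $O(N^{-s})$) has Lebesgue measure $O(N^{\e-s})\to 0$ while $\m(B_N)\gtrsim\e$, which is incompatible with absolute continuity. Your two-independent-copies small-ball bound cannot produce this, because it averages over the location and hence loses the fact that a \emph{constant} fraction of $\m$ concentrates on a vanishing Lebesgue measure set.

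A minor side remark: $\sum_p 1/p=\infty$ is Euler's theorem; Mertens gives the sharper asymptotic $\sum_{p\le N}1/p=\log\log N+O(1)$, which is what you actually use in the small-ball computation.
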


Erd\H{o}s proved that the asymptotic distribution of the additive function $f_n=\sum 1/p^s$, where the summation runs over all prime divisors of $n$ is non-atomic singular \cite{E}.
The proof of Theorem \ref{sing} uses essentially the same method as Erd\H{o}s'.

The organisation of this paper is as follows:
In Section \ref{records}, we discuss the record indicator and the background, and then obtain the formula for the conditional expectation (\ref{eq2}) in Theorem \ref{condexp}.
In Section \ref{regularity}, we show a part of Theorem \ref{main} concerning the estimate of the Fourier transform in Theorem \ref{fourier} and deduce Corollary \ref{HausYoung}.
In Section \ref{ac}, we prove the absolute continuity of the distribution $\m$ for all $s>0$ and $\b=1$ in Theorem \ref{all-s}, and the unboundedness of the densities for all $s>1$ and $\b=1$ in Theorem \ref{unbounded}.
Then we complete the proof of Theorem \ref{main}.
In Section \ref{primes}, we prove Theorem \ref{sing}.
In Section \ref{questions}, we discuss the boundedness of the density at the critical case when $s=1$ and $\b=1$, and present some open problems.

\begin{notation}
Throughout this article, we use $c, C, c_1, c_2, \dots$, to denote absolute constants whose exact values may change from line to line, and also use them with subscripts, for instance, $C_\e$ to specify its dependence only on $\e$.
For functions $f$ and $g$, we write $f \asymp g$ if there exist some absolute constants $c_1, c_2 >0$ such that $c_1 g \le f \le c_2 g$.
\end{notation}

\section{Records and probabilistic motivations}\label{records}

We start with the following question about the sequence of record indicators
$I_1, I_2, \ldots $ derived from independent uniform $[0,1]$ variables $U_1, U_2, \ldots$
as in \eqref{recinds}.
How much information does the sequence of record indicators reveal about the value of $U_1$?
The answer, provided below, may be compared with the answer to the corresponding question if
$(I_1,I_2, \ldots )$ is replaced by $(X_1,X_2, \ldots )$ where $X_n:= 1(U_n < U_1)$.
In this case, $U_1$ is recovered from $(X_1, X_2, \ldots )$ with probability one as the almost sure limit of $S_n/n$ as $n \to \infty$, where $S_n:= \sum_{k=1}^n X_k$ is the number of ones in the first $n$ places.
For the record indicators $(I_1,I_2, \ldots )$, the value $U_1$ cannot be fully recovered from the record indicators $(I_1,I_2, \ldots )$.
More precisely, we establish the following theorem:

\begin{theorem}\label{condexp}
Let $V_\infty:=\E[U_1 | I_1, I_2, \dots]$. Then
\begin{equation*}
V_\infty=\prod_{n=2}^{\infty}\left(1-\frac{I_n}{n}\right).
\end{equation*}
Moreover,
\begin{equation}\label{meansq-value}
\E V_{\infty}^2=\frac{-\cos \left(\sqrt{5}\pi/2\right)}{\pi}=0.29667513474359 \cdots < \frac{1}{3}=\E\left(U_1^2\right).
\end{equation}
\end{theorem}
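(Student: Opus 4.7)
\emph{Proof proposal.}
The plan is to compute the finite-$n$ conditional expectation $V_n := \E[U_1 \mid I_1,\ldots,I_n]$ by passing through the rank of $U_1$, and then to take $n\to\infty$. Let $R_1^{(n)}$ denote the rank of $U_1$ among $U_1,\ldots,U_n$. Since the rank vector of $n$ i.i.d.\ uniforms is a uniform random permutation of $\{1,\ldots,n\}$ independent of the order statistics (which satisfy $\E[U_{(r)}] = r/(n+1)$), and since the indicators $(I_1,\ldots,I_n)$ are determined by the ranks alone, the tower property yields
\begin{equation*}
V_n \;=\; \frac{1}{n+1}\,\E\bigl[R_1^{(n)} \,\big|\, I_1,\ldots,I_n\bigr].
\end{equation*}
The key intermediate claim is $\E[R_1^{(n)} \mid I_1,\ldots,I_n] = n\prod_{k=2}^{n}(1 - I_k/k)$, which I would prove by induction on $n$. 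The base $n=1$ is immediate. For the step, condition on $I_{n+1}$: if $I_{n+1}=1$, then $U_{n+1}$ is a new maximum, so $R_1^{(n+1)} = R_1^{(n)}$, and the factor $1 - 1/(n+1) = n/(n+1)$ reconciles the change in the normalizing prefactor. If $I_{n+1}=0$, an exchangeability argument shows that the rank $K$ of $U_{n+1}$ among $U_1,\ldots,U_{n+1}$ is uniform on $\{1,\ldots,n\}$ and independent of $(I_1,\ldots,I_n)$ and of the ranks of $U_1,\ldots,U_n$; then $R_1^{(n+1)} = R_1^{(n)} + \mathbf{1}\{K \le R_1^{(n)}\}$ has conditional expectation $(1+1/n)\,R_1^{(n)}$, again matching the product.

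Passing to the limit, L\'evy's martingale convergence theorem gives $V_n \to V_\infty$ almost surely, while the product $\prod_{k=2}^n(1 - I_k/k)$ converges almost surely because $\sum_k \mathrm{Var}(\log(1 - I_k/k)) = O(\sum_k 1/k^3) < \infty$. Since $n/(n+1)\to 1$, we conclude $V_\infty = \prod_{k=2}^{\infty}(1 - I_k/k)$.

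For the second moment, the partial products $\prod_{k=2}^N(1-I_k/k)^2$ lie in $[0,1]$ and converge a.s.\ to $V_\infty^2$, so bounded convergence together with the independence of the $I_k$ yields
\begin{equation*}
\E V_\infty^2 \;=\; \prod_{k=2}^\infty \E\!\left[\left(1 - \frac{I_k}{k}\right)^{\!2}\right] \;=\; \prod_{k=2}^\infty \left(1 - \frac{2}{k^2} + \frac{1}{k^3}\right).
\end{equation*}
The main obstacle is evaluating this in closed form. My plan is to factor $k^3 - 2k + 1 = (k-1)(k^2+k-1) = (k-1)(k-\alpha)(k-\beta)$, where $\alpha,\beta = (-1\pm\sqrt{5})/2$ are the roots of $x^2+x-1$; in particular $\alpha+\beta=-1$ and $\alpha^2 = 1-\alpha$. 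Writing each factor as $(1-1/k)(1-\alpha/k)(1-\beta/k)$ and telescoping with the Gamma function (using $\Gamma(N+1-a)/N! \sim N^{-a}$, and the crucial cancellation $-1-\alpha-\beta = 0$) gives
\begin{equation*}
\prod_{k=2}^\infty \left(1 - \frac{2}{k^2} + \frac{1}{k^3}\right) \;=\; \frac{1}{\Gamma(2-\alpha)\Gamma(2-\beta)}.
\end{equation*}
Substituting $\beta=-1-\alpha$ and using $\alpha^2 = 1-\alpha$, the product $(1-\alpha)(1+\alpha)(2+\alpha)\alpha$ collapses to $1$, and the reflection formula $\Gamma(\alpha)\Gamma(1-\alpha)=\pi/\sin(\pi\alpha)$ then yields $\Gamma(2-\alpha)\Gamma(2-\beta) = \pi/\sin(\pi\alpha)$. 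Since $\pi\alpha = \pi(\sqrt{5}-1)/2$ and $\sin\bigl(\pi(\sqrt{5}-1)/2\bigr) = -\cos(\pi\sqrt{5}/2)$, the stated formula follows. The strict inequality $\E V_\infty^2 < 1/3 = \E U_1^2$ is then Jensen's inequality for conditional expectation, strict since $U_1$ is not measurable with respect to $\sigma(I_1,I_2,\ldots)$.
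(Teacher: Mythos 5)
Your proof is correct, but it differs from the paper's at both key steps. For the conditional expectation, the paper first establishes (their Theorem~\ref{thm1}) the full decomposition $U_1 = M_n\prod_{j=2}^n M_{j-1}/M_j$ with $M_n=\max_{i\le n}U_i$, shows the factors are independent with explicit beta distributions, identifies $I_j=0 \Leftrightarrow M_{j-1}/M_j=1$, and reads off $V_n$ from the means of those betas; this gives the entire conditional law of $U_1$ given the records, not only its mean. You instead pass through the rank $R_1^{(n)}$ of $U_1$, exploit rank/order-statistic independence and $\E U_{(r)}=r/(n+1)$, and prove $\E[R_1^{(n)}\mid I_1,\ldots,I_n]=n\prod_{k=2}^n(1-I_k/k)$ by a clean induction keyed on whether $U_{n+1}$ is a record. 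This is a genuinely more elementary route to the prefactor $\frac{n}{n+1}$ and the product, at the cost of not exposing the conditional distribution. Both then invoke martingale convergence; the paper additionally remarks that the partial products converge deterministically by monotonicity, which renders your variance/Kolmogorov argument unnecessary (though not wrong). For the second moment, both of you start from $\E V_\infty^2=\prod_{k\ge2}(1-2/k^2+1/k^3)$ and ultimately use the reflection formula and Stirling-type asymptotics, but the algebra diverges: the paper peels off $\bigl(\tfrac{j-1}{j}\bigr)^2\bigl(\tfrac{j+1}{j}\bigr)^2$ to telescope to $\tfrac14\prod(1+\tfrac{1}{(j-1)(j+1)^2})$ and then evaluates that residual product with a gamma-quotient identity, whereas you factor $k^3-2k+1=(k-1)(k-\alpha)(k-\beta)$ with $\alpha,\beta$ the roots of $x^2+x-1$ and telescope the three linear factors directly, using $-1-\alpha-\beta=0$ to kill the $N$-dependence; this is arguably more streamlined. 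The only soft spot is the final sentence: invoking strict Jensen requires knowing in advance that $U_1$ is not $\sigma(I_1,I_2,\ldots)$-measurable, which you have not independently established — but this is moot, since the explicit value $-\cos(\sqrt5\pi/2)/\pi\approx0.2967<1/3$ already gives the strict inequality outright, which is exactly how the paper handles it (and then deduces non-measurability as a corollary).
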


Since
$$
\E( U_1 - V_\infty)^2 = \E(U_1^2) - \E(V_\infty^2) >0
$$
the difference between $\E V_\infty^2$ and $\E U_1^2$ reflects the fact that $U_1$ is not a measurable function
of all the record indicators $(I_1, I_2, \dots)$.
The Jessen-Wintner law of pure types implies that the distribution of the infinite product $V_\infty$ is either singular, or absolutely continuous with respect to Lebesgue measure \cite{JW}.
Define a random series by taking the logarithm and the positive sign:
$$
S_{\infty}:=-\log V_\infty =-\sum_{n=2}^{\infty}\log \left(1-\frac{I_n}{n}\right).
$$
Since $-\log(1-I_n/n)\asymp I_n/n$, one can expect that the above sum is approximated by $S_o$ in (\ref{original}).
Actually, the following holds in the same way as in the case of $S_o$.

\begin{theorem}\label{Sinfty}
Let $\m_\infty$ be the distribution of $S_\infty$.
Then $\m_\infty$ has a density in $L^q$ for every $1\le q <\infty$.
In particular, the distribution of $V_\infty=\E[U_1 | I_1, I_2, \dots]$ is absolutely continuous with respect to Lebesgue measure.
\end{theorem}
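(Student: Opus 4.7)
The plan is to mirror the $s=\b=1$ case of Theorem \ref{main} and Corollary \ref{HausYoung}, treating $S_\infty$ as a smooth perturbation of $S_o$. Since $I_n\in\{0,1\}$, one has $-\log(1-I_n/n) = I_n\,a_n$ with $a_n := -\log(1-1/n) = 1/n + 1/(2n^2) + O(1/n^3)$, so $S_\infty = \sum_{n\ge 2} I_n a_n$, and by independence
$$\hat\m_\infty(t) = \prod_{n=2}^{\infty}\left(1 - \frac{1}{n} + \frac{e^{ita_n}}{n}\right), \qquad |\hat\m_\infty(t)|^2 = \prod_{n=2}^{\infty}\left(1 - \frac{2(n-1)}{n^2}\bigl(1-\cos(ta_n)\bigr)\right).$$

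The central step is to establish a Fourier decay bound of the same shape as in part~(1) of Theorem \ref{main}, namely
$$|\hat\m_\infty(t)| \le C_\e |t|^{-1+\e}\qquad\text{for every sufficiently small } \e>0.$$
The proof of the analogous bound for $\hat\m$ rests on the Weyl--van der Corput estimates for exponential sums with phase $n\mapsto t/n$, and the hypotheses of those estimates are only the orders of magnitude $\phi^{(k)}(n) \asymp |t|/n^{k+1}$ and monotonicity of the successive derivatives. For the perturbed phase $\phi(n) := ta_n$ one computes $\phi'(n) = -t/(n(n-1))$, and more generally $\phi^{(k)}(n) \asymp |t|/n^{k+1}$ with the same signs and monotonicity. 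Hence the same dyadic decomposition and exponential-sum bounds go through, with only the implied constants changed, producing the desired decay of $\hat\m_\infty$.

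Given this Fourier decay, Hausdorff--Young finishes the $L^q$ claim. Combining $|\hat\m_\infty|\le 1$ with the tail bound gives $\hat\m_\infty \in L^p$ for every $p > 1/(1-\e)$, and for $p \in (1/(1-\e),2]$ Hausdorff--Young produces a density in $L^{p/(p-1)}$. Letting $\e \downarrow 0$ covers every finite $q$, so $\m_\infty$ has a density in $L^q$ for all $1\le q < \infty$. The absolute continuity of $V_\infty = \exp(-S_\infty)$ then follows from the smooth change of variables $s\mapsto e^{-s}$, which transports any Lebesgue-density of $S_\infty$ to a Lebesgue-density of $V_\infty$ on $(0,1]$.

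The main obstacle is the adaptation of the Weyl--van der Corput bounds of Section \ref{regularity} from the phase $t/n$ to the phase $ta_n$. Concretely, one must verify that the monotonicity and the second- and higher-derivative lower bounds feeding into van der Corput's lemma remain uniformly valid under the $O(1/n^2)$ perturbation, and must dispose of the handful of small-$n$ terms (say $n \le n_0(t)$) by the trivial bound $|1 - 1/n + e^{ita_n}/n|\le 1$. As the surrounding text signals, I expect no genuinely new analytic input beyond what was already used in the proof of Theorem \ref{main} at $s=\b=1$.
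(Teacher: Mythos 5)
Your proposal is correct and follows essentially the same route as the paper: the paper's proof applies Lemma \ref{Iq-sigma}(1) with $s=1$ directly to the phase $f(x) = -t\log(1-1/x)$, noting that $|f^{(r)}(x)| \asymp t/x^{r+1}$ on $[2,\infty)$ for $0 \le r \le q+2$, which is exactly your observation that the perturbed phase $ta_n$ has derivatives of the same order as in the $s=\b=1$ case. The only organizational difference is that the paper does not need any re-verification of van der Corput hypotheses, since Lemma \ref{Iq-sigma} was already stated for general $f$ satisfying the order-of-magnitude bounds rather than specifically for $f(x)=t/x^s$; your "main obstacle" is therefore a non-issue, and the rest (Hausdorff--Young as in Corollary \ref{HausYoung}, then the smooth change of variables $V_\infty = e^{-S_\infty}$) matches the paper.
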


We prove Theorem \ref{Sinfty} at the end of Section \ref{regularity} as a consequence of more general facts.
In this section we establish Theorem \ref{condexp}.
First, we obtain the conditional distribution of $U_1$ given the record indicators $(I_1,I_2, \ldots )$.
We denote by $beta(a, b)$ for $a, b>0$ the probability distribution on $[0,1]$ whose density at $u \in (0,1)$ relative to length measure is proportional to $u^{a-1}(1-u)^{b-1}$.

\begin{theorem}
\label{thm1}
For $n \ge 1$, let
$$
M_n:= \max_{1\le i \le n} U_i.
$$
Then
\begin{align}
U_1 = M_n \prod_{j=2}^n \frac{M_{j-1}}{M_{j}}
\label{ratios}
\end{align}
where the random variables $M_n$ and $M_{j-1}/M_{j}, 2 \le j \le n$ are independent, with
$M_n$ distributed beta$(n,1)$ and $M_{j-1}/M_{j}$ distributed as a mixture with weights $1 - 1/j$ and $1/j$ of
a point mass at $1$ and a beta$(j-1,1)$ distribution on $(0,1)$.

The conditional distribution of $U_1$ given $I_1, \ldots, I_n$ is described by \eqref{ratios} where
given $I_1, \ldots, I_n$ the $M_n$ and $M_{j-1}/M_{j}$ for $2 \le j \le n$ are conditionally independent, with
\begin{itemize}
\item[(i)] $M_{j-1}/M_{j} = 1$ if and only if $I_j=0$,
\item[(ii)] $M_n$ distributed beta$(n,1)$,
\item[(iii)] $M_{j-1}/M_{j}$ distributed beta $(j-1,1)$ if $I_j = 1$.
\end{itemize}
\end{theorem}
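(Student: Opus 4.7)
The identity $U_1 = M_n \prod_{j=2}^n M_{j-1}/M_j$ is the telescoping product $M_1 = U_1$, hence holds deterministically once $M_1,\ldots,M_n$ are defined. All content of Theorem \ref{thm1} therefore lies in the distributional claims. My plan is to read the joint law of $(M_n, M_{n-1}, \ldots, M_1)$ from right to left: identify the marginal of $M_n$, then the conditional law of $R_n := M_{n-1}/M_n$ given $M_n$, then the conditional law of $R_{n-1}$ given $(M_n, R_n)$, and so on, ending with mutual independence of $M_n$ and $R_2, \ldots, R_n$ with the stated marginals.

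The two key ingredients are exchangeability and scaling. By exchangeability of $U_1, \ldots, U_j$, the location $A_j \in \{1, \ldots, j\}$ of their maximum is uniform on $\{1,\ldots,j\}$ and independent of the value $M_j$, while $M_j$ itself has density $j m^{j-1}$ on $(0,1)$, i.e.\ beta$(j,1)$. On $\{A_j = j\} = \{I_j=1\}$ the remaining $U_1, \ldots, U_{j-1}$ are conditionally iid uniform on $[0, M_j]$, so by scaling $R_j$ has the law of the maximum of $j-1$ iid uniforms on $[0,1]$, which is beta$(j-1,1)$; on $\{I_j=0\}$, $M_{j-1}=M_j$ and $R_j=1$. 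Crucially, the resulting conditional law of $R_j$ given $M_j$ does not depend on the value of $M_j$, which yields the stated mixture marginal and the independence of $R_j$ from $M_j$.

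To upgrade this into joint independence, I iterate. The key observation is that in \emph{both} cases $I_n=0$ and $I_n=1$, the conditional joint law of $(U_1, \ldots, U_{n-1})$ given $(M_n, M_{n-1})$ is the same: the argmax of $(U_1, \ldots, U_{n-1})$ is uniform on $\{1, \ldots, n-1\}$ and equals $M_{n-1}$, while the remaining coordinates are conditionally iid uniform on $[0, M_{n-1}]$. Dividing by $M_{n-1}$ reproduces exactly the original setup for an iid uniform sequence of length $n-1$, so the previous argument can be re-applied to obtain the conditional law of $R_{n-1}$ given $(M_n, R_n)$, and then iteratively for $R_{n-2}, \ldots, R_2$. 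The conditional statement given $I_1, \ldots, I_n$ is then automatic: each $I_j = \mathbf{1}\{R_j<1\}$ is a deterministic function of $R_j$ alone, so the unconditional mutual independence immediately yields conditional independence, and the conditional laws of the $R_j$ are read off from their mixture marginals. The main obstacle will be making this inductive step rigorous, namely verifying by a short exchangeability computation that conditioning on $(M_n, R_n)$, separately on $\{I_n=0\}$ and $\{I_n=1\}$, really does reproduce the desired "iid uniforms given their maximum" structure for the shortened sequence; once this is done the theorem falls out by induction on $n$.
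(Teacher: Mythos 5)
Your proof is correct, but it takes a genuinely different route from the paper's. The paper runs a forward induction on $n$ driven by the recursion $M_{n+1}=\max(M_n,U_{n+1})$: since $U_{n+1}$ is fresh, the pair $(M_{n+1},M_n/M_{n+1})$ is a function of $(M_n,U_{n+1})$ and hence automatically independent of the earlier ratios $M_{j-1}/M_j$, $j\le n$; the only extra fact needed is that $M_{n+1}$ and $M_n/M_{n+1}$ are themselves independent, which the paper verifies by a short two-variable density computation, and for the conditional part it invokes the known independence of $M_n$ from $(I_1,\ldots,I_n)$. You instead peel from the top: you condition on $(M_n,M_{n-1})$, observe via exchangeability that the resulting law of $(U_1,\ldots,U_{n-1})$ is the same on $\{I_n=0\}$ and $\{I_n=1\}$ and depends only on $M_{n-1}$, and then use the scale-invariance of the ratios $R_2,\ldots,R_{n-1}$ to conclude that their conditional law given $(M_n,R_n)$ is exactly the unconditional law for an iid uniform sequence of length $n-1$, closing a backward induction. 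Your route is conceptually appealing because it makes the self-similar records structure explicit and derives, rather than cites, the independence of $M_n$ from the indicators; the cost is that the exchangeability step ("same conditional law on both events, reproducing the shorter iid setup after rescaling") needs a careful regular-conditional-probability computation. The paper's route is shorter because the forward recursion makes independence of the new ratio from the old ones automatic and reduces everything to one bivariate density factorization. Both reduce the conditional claim to the unconditional one by noting that $I_j=\mathbf 1\{M_{j-1}/M_j<1\}$ is a function of the single ratio $M_{j-1}/M_j$, so conditioning on $(I_1,\ldots,I_n)$ preserves the product structure and simply restricts each ratio's marginal to the appropriate mixture component.
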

\begin{proof}
Note that $M_n$ is distributed $beta(n,1)$ since $\Pb(M_n < x)=\Pb(U_1 < x)^n$, and
the ratio $M_{j-1}/M_j$ is distributed as indicated since
$\Pb(M_{j-1}/M_j=1)=\Pb(I_j=0)=1-1/j$,
and for $x<1$, $\Pb(M_{j-1}/M_j < x)=\Pb(M_{j-1} < x U_j)=\int_0^1(xu)^{j-1}du$.
The asserted joint distribution of the $n$ factors in \eqref{ratios} is established by induction on $n$, using $M_{n+1} = \max(M_n, U_{n+1})$, where $U_{n+1}$ is independent of
$M_n$ and $M_{j-1}/M_{j}, 2 \le j \le n$.
Indeed, it is enough to show that $M_n$ and $M_{n-1}/M_n$ are independent and this can be checked directly.

It is clear by definition of
$I_j$ that (i) above holds. So the $I_j$ are functions of the independent ratios $M_{j-1}/M_{j}$, hence independent as $j$ varies with $\Pb(I_j = 1) = 1/j$,
as found by R\'enyi \cite{R}.
The independence of $M_n$ and the $I_j$ for $1 \le j \le n$ is well-known \cite[Lemma 13.2]{N}.
Thus, given $I_1,\dots, I_n$, the $M_n$ and the ratios $M_{j-1}/M_j$ for $2 \le j \le n$ are conditionally independent.
It follows easily that the conditional distribution of $M_n$ and $M_{j-1}/M_{j}$ given $I_1, \dots, I_n$ is as indicated in (ii) and (iii).
\end{proof}

Now we prove Theorem \ref{condexp}:

\begin{proof}[Proof of Theorem \ref{condexp}]

Since the mean of beta$(a,b)$ is $a/(a+b)$, we read from Theorem \ref{thm1} that
\begin{align}
V_n:=\E(U_1 | I_1, \ldots, I_n ) = \frac{n}{n+1} \prod_{j=2}^n \left( 1 - \frac{I_j}{j} \right).
\label{mg}
\end{align}
By the bounded martingale convergence theorem,
\begin{align}
V_\infty:=\E(U_1 | I_1, I_2, \ldots)  = \prod_{j=2}^\infty \left( 1 - \frac{  I_j } {j} \right).
\end{align}
Note in passing that the limiting infinite products considered here exist not only
almost surely, as guaranteed by martingale convergence, but in fact for all sequences of $0/1$ values of $I_2, I_3, \ldots$, allowing $0$ as a possible limit.  This is obvious by inspection of the infinite products, since the partial products are non-increasing.

The mean square of $V_\infty$ is given by
\begin{equation}\label{mean-2}
\E( V_\infty ^2 ) = \prod_{j=2}^\infty \left( 1 - \frac{1}{j} + \frac{1}{j} \left( 1 - \frac{1}{j} \right)^2 \right).
\end{equation}
 The $j$-th factor in \eqref{mean-2} is
\begin{align}
1 - \frac{1}{j} + \frac{1}{j} \left( 1 - \frac{1}{j} \right)^2 &=    \frac{ (j-1) ( j^2 + j - 1) }{j^3 } \\
&=    \frac{ (j-1) }{j^4} (  (j-1) (j+1)^2 + 1 ) \\
&=    \left( \frac{j-1}{j} \right) ^2 \left( \frac{j+1}{j} \right)^2 \left(  1 + \frac{1} {(j-1) (j+1)^2 } \right)
\end{align}
There is some telescoping of the product, with the simplification
$$
\E( V_\infty^2) = \frac{1}{4} \prod_{j=2}^\infty \left(  1 + \frac{1} {(j-1) (j+1)^2 } \right)
$$
with the finite $n$ version, using (\ref{mg}),
\begin{align}
\E( V_n^2) = \frac{1}{4} \prod_{j=2}^n \left(  1 + \frac{1} {(j-1) (j+1)^2 } \right),
\label{finprod}
\end{align}
which increases to its limit as $n$ increases.

To prove the formula \eqref{meansq-value}, we show first
that
the finite product in \eqref{finprod} can be evaluated as
\begin{equation}
\label{gamexp}
\E( V_n^2) =  \frac{ 5 n \Gamma \left( n + \frac{3}{2} - \frac{\sqrt{5}}{2} \right) \Gamma \left( n + \frac{3}{2} + \frac{\sqrt{5}}{2} \right) }
{ (n+1)!^2 \Gamma \left( \frac{7}{2} - \frac{\sqrt{5}}{2} \right) \Gamma \left( \frac{7}{2} + \frac{\sqrt{5}}{2} \right) }.
\end{equation}
Indeed,  this formula holds for $n=1$, with both sides equal to $1/4$, by interpreting the empty product in \eqref{finprod} as $1$, and using
the gamma recursion $\Gamma(r+1) = r \Gamma(r)$. The proof for general $n$ is by induction. Assuming that \eqref{gamexp} has been established for
$n$, the formula with $n+1$ instead of $n$ is deduced from the identity
$$
1+\frac{1}{n(n+2)^2}=
\frac{ (n+1)(n^2+3n+1) }
{n(n+2)^2},
$$
by using the gamma recursion to expand
$$
n^2+3n+1=
\left(  n + \frac{3}{2} \right)^2 - \left( \frac{\sqrt{5}}{2}\right) ^2 =
\frac{\Gamma \left(n+1 +\frac{3}{2}-\frac{\sqrt{5}}{2}\right)\Gamma \left(n+1 +\frac{3}{2}+\frac{\sqrt{5}}{2}\right)}{\Gamma \left(n+\frac{3}{2}-\frac{\sqrt{5}}{2}\right)\Gamma \left(n+\frac{3}{2}+\frac{\sqrt{5}}{2}\right)}.
$$
Euler's reflection formula for the gamma function
$$
\Gamma(1 - z ) \Gamma(z ) = \frac{ \pi } { \sin ( \pi z ) }
$$
applied to $z = 1/2 + x$ becomes
$$
\Gamma(1/2 - x ) \Gamma(1/2 + x  ) = \frac{ \pi } { \cos ( \pi x ) }.
$$
By repeated applications of $\Gamma(r+1) = r \Gamma(r)$ this yields
$$
\Gamma(7/2 - x ) \Gamma(7/2 + x  ) = ((1/2)^2 - x^2)( (3/2)^2 - x^2 ) ( (5/2)^2 - x^2 ) \frac{ \pi } { \cos ( \pi x ) }
$$
which for $x = \sqrt{5}/2$ reduces to
$$
\Gamma\left( \frac{7}{2} - \frac{\sqrt{5}}{2}  \right) \Gamma\left( \frac{7}{2} + \frac{\sqrt{5}}{2}  \right) = \frac{ - 5 \pi } { \cos ( \pi \sqrt{5}/ 2 ) } .
$$
Substituting this expression in \eqref{gamexp} and evaluating the limit with Stirling's formula $\Gamma(n+r) \sim (n/e)^n n ^{r - 1/2} \sqrt{2 \pi }$
yields \eqref{meansq-value}.
\end{proof}

\section{Estimates of Fourier transforms}\label{regularity}

Recall that
the random Dirichlet series
$
S=\sum_{n=1}^{\infty}I_n/n^s
$
with parameters $s>0$ and $\b>0$ is defined by an independent sequence of Bernoulli random variables $I_n$ taking value $1$ with probability $1/n^\b$ and $0$ otherwise.
We assume that $s+\b>1$ for the almost sure convergence.
Let $\m$ be the distribution of $S$.
Here we start with an estimate of the Fourier transform of $\m$,
$$
\hat \m(t) := \int_0^\infty e^{- 2 \pi i t x } d \mu(x) \qquad (-\infty < t < \infty ).
$$

\begin{theorem}\label{fourier}
Let $s>0$ and $0< \b \le 1$ with $s + \b > 1$.
	\begin{itemize}
	\item[(1)] Let $s>0$ arbitrary and $\b=1$.
	Then for every small enough $\e>0$ there exists a constant $C_{\e,s} > 0$ such that for every $t$,
	\begin{equation*}
		|\hat \m(t)| \le C_{\e,s} |t|^{-\frac{1}{s}+\e}.
	\end{equation*}
	In particular, for $0<s<2$ the distribution $\m$ has a density in $L^2$, and for $0<s<1$ it has a bounded continuous density.
	\item[(2)] Let $s>0$ arbitrary and $0 < \b < 1$ with $s+\b>1$.
	Then there exist constants $C_{\b,s} > 0$ and $T>0$ such that for every $|t| \ge T$,
	\begin{equation*}
		|\hat \m(t)| \le \exp\left(-C_{\b,s} |t|^{\frac{1-\b}{s+1}}\right).
	\end{equation*}
	In particular, the distribution $\m$ has a smooth density.
	\end{itemize}
\end{theorem}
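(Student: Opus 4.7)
The plan is to exploit the independence of the $I_n$ to write $\hat\m(t)$ as an infinite product, use $1-x \le e^{-x}$ to bound $|\hat\m|^2$ by a negative exponential, and reduce to a lower bound on a non-negative sum. Using the algebraic identity $|1 - p + p e^{i\th}|^2 = 1 - 4p(1-p)\sin^2(\th/2)$,
\[
\hat\m(t) = \prod_{n=1}^\infty \Bigl(1 - \frac{1}{n^\b} + \frac{1}{n^\b}\, e^{-2\pi i t/n^s}\Bigr) \quad \Longrightarrow \quad |\hat\m(t)|^2 \le \exp\bigl(-4\,\Sigma(t)\bigr),
\]
where $\Sigma(t) := \sum_n n^{-\b}(1-n^{-\b})\sin^2(\pi t/n^s)$. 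Both assertions of the theorem reduce to lower bounds on $\Sigma(t)$. The identity $\sin^2\th = (1-\cos 2\th)/2$ splits $\Sigma(t)$ into a positive ``mean'' part and an oscillating exponential-sum part with phase $f(x) := t/x^s$ and derivatives $f^{(k)}(n) \asymp |t|/n^{s+k}$, which is the natural setting for van der Corput's derivative tests on dyadic windows $[N, 2N]$.

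For Part 2, where $0 < \b < 1$, I would take just one dyadic window, with $N \asymp |t|^{1/(s+1)}$. On this window the mean equals
\[
\tfrac{1}{2}\sum_{n \in [N, 2N]} n^{-\b}(1-n^{-\b}) \asymp N^{1-\b} = |t|^{(1-\b)/(s+1)}.
\]
The choice $N \asymp |t|^{1/(s+1)}$ is exactly the balancing point of van der Corput's second-derivative test (with $f''(n) \asymp |t|/N^{s+2}$), yielding $|\sum_{n \in [N,2N]} e^{2\pi i t/n^s}| \le C(\sqrt{|t|/N^s} + \sqrt{N^{s+2}/|t|}) = O(|t|^{1/(2(s+1))})$. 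Applying Abel summation against the slowly-varying weight $n^{-\b}(1-n^{-\b})$ (whose total variation on the window is $O(N^{-\b})$) yields an oscillatory contribution of at most $O(|t|^{(1-2\b)/(2(s+1))})$. Since $2(1-\b) > 1 - 2\b$ trivially, the mean dominates, $\Sigma(t) \gtrsim |t|^{(1-\b)/(s+1)}$, and the stretched-exponential bound follows.

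For Part 1, where $\b = 1$, a single window contributes only $O(1)$, so I would sum over \emph{all} dyadic $N \in [2, |t|^{1/s}]$; the mean then accumulates to $\tfrac{1}{2}\sum_{2 \le n \le |t|^{1/s}} n^{-1}(1-n^{-1}) = (1+o(1))(\log|t|)/(2s)$. To control the oscillatory sum I would split by which derivative test applies: on $N \lesssim |t|^{1/(s+1)}$ one has $|f'(n)| \asymp |t|/N^{s+1} \gtrsim 1$ and the first-derivative test gives exponential-sum bound $O(N^{s+1}/|t|)$ per window and $O(N^s/|t|)$ contribution to $\sum \cos/n$, which sums geometrically over dyadic $N$ to $O(|t|^{-1/(s+1)})$; on $|t|^{1/(s+1)} \lesssim N \lesssim |t|^{1/s}$ the second-derivative test gives $O(\sqrt{|t|/N^{s+2}} + \sqrt{N^s/|t|})$ per window, geometric in each of its two ends, summing to $O(1)$. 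Consequently the total oscillation is $O(1)$, $\Sigma(t) \ge (1-\e)(\log|t|)/(2s)$ for large $|t|$, and $|\hat\m(t)| \le C_{\e,s}|t|^{-1/s+\e}$.

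The main obstacle lies in Part 1: the mean being only logarithmic in $|t|$ leaves no slack, so the oscillation error must be shown to be genuinely smaller than $\log|t|$ by optimally partitioning between the first- and second-derivative tests and by handling the slowly-varying weight $n^{-1}(1-n^{-1})$ carefully via Abel summation. The ``in particular'' consequences are standard Fourier analysis: for $0 < s < 2$ the bound $|\hat\m(t)| = O(|t|^{-1/s+\e})$ is square-integrable at infinity, so $\m$ has a density in $L^2$ by Plancherel; for $0 < s < 1$ the bound is integrable, so the density is bounded and continuous by Fourier inversion; and in Part 2 the stretched-exponential decay puts $\hat\m$ and all of its polynomial multiples in $L^1$, delivering a $C^\infty$ density.
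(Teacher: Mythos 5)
Your overall framework is the same as the paper's: write $\hat\mu(t)$ as an infinite product, bound $|\hat\mu(t)|^2 \le \exp(-4\Sigma(t))$ via $\log(1-x)\le -x$, split $\Sigma(t)$ into a deterministic ``mean'' part and an oscillating exponential-sum part, and control the oscillation with van der Corput on dyadic blocks. Part 2 of your plan is sound; the window $N\asymp |t|^{1/(s+1)}$ works (the paper uses $[t^{1/(s+2)},t^{1/(s+1)}]$, but either choice lets the mean $\asymp N^{1-\beta}$ dominate the oscillation from the second-derivative test).

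Part 1, however, has a genuine gap. You claim that for $N\lesssim |t|^{1/(s+1)}$, where $|f'(n)|\asymp |t|/N^{s+1}\gtrsim 1$, the first-derivative (Kuzmin--Landau) test gives $|\sum_{n\in(N,2N]}e(t/n^s)|=O(N^{s+1}/|t|)$. That test requires $\|f'(x)\|$ (distance from $f'(x)$ to the nearest integer) to be bounded away from $0$ throughout the block, and gives a bound $O(1/\lambda)$ with $\lambda=\inf\|f'\|$. It does \emph{not} follow from $|f'|\gtrsim 1$. Indeed when $N\lesssim |t|^{1/(s+1)}$ the range of $f'$ over $(N,2N]$ has length $\asymp |t|/N^{s+1}\gtrsim 1$, so $f'$ crosses many integers and Kuzmin--Landau is inapplicable; there is no nontrivial first-derivative estimate on these small windows. (The first-derivative regime is actually the \emph{opposite} end, $N\gtrsim |t|^{1/(s+1)}$, where $|f'|\ll 1$.) Using only the second-derivative test $(q=0)$, nontrivial control is available for $N\in[t^{1/(s+2)},t^{1/s}]$ only, which loses a fixed fraction of the logarithmic mean and yields only the weaker exponent $-\tfrac{2}{s(s+2)}+\e$ rather than $-\tfrac{1}{s}+\e$.

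To get the sharp exponent you need, as the paper does, the $q$-th iterate of van der Corput's method (Theorem 2.9 of \cite{GK}, used with $q+2$ derivatives and $Q=2^q$), which gives nontrivial exponential-sum bounds down to $N\gtrsim t^{1/(q+2+s)}$. One then restricts $\Sigma(t)$ to $N\in[t^{1/(q+2+s)},t^{1/s}]$, obtaining mean $\asymp(\tfrac{1}{s}-\tfrac{1}{q+2+s})\log t$ and oscillation $O_{q,s}(1)$, and lets $q\to\infty$ to make $\tfrac{1}{q+2+s}<\e$. You should replace the first-derivative step by this higher-order argument; everything else in your Part 1, including the dyadic Abel summation against the weight $n^{-1}(1-n^{-1})$ and the $L^2$/$L^1$ consequences, is fine.
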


For the Fourier transform of $\m$, we have
\begin{align*}
|\hat \m(t)|^2	&=\int_{\R^2}e^{-2\pi i t (x-y)}d\m(x)d\m(y) \\
			&=\prod_{n=1}^{\infty}\left(1-\frac{2}{n^\b}+\frac{2}{n^{2\b}}+2\left(\frac{1}{n^\b}-\frac{1}{n^{2\b}}\right)\cos\frac{2 \pi t}{n^s}\right).
\end{align*}

Since $|\hat \m(t)|^2$ is even in $t$, it is enough to estimate for $t >0$.
To prove Theorem \ref{fourier} (1),
we will show that for every $\e >0$, there exists an interval $I_t=(a(t), b(t)]$ such that
the above product which is restricted to $I_t$
has the desired bound.
It is realised by taking $I_t$ as $[t^{\frac{1}{q+2+s}}, t^{\frac{1}{s}}]$, where $q$ is a large enough integer.
To prove Theorem \ref{fourier} (2),
we will find an interval $I_t$ such that the above product which is restricted to $I_t$ decays sub-exponentially fast.
The interval $I_t$ is chosen as $[t^{\frac{1}{s+2}}, t^{\frac{1}{s+1}}]$.

We begin with a lemma which involves an estimate of exponential sums.

\begin{lemma}\label{Iq-sigma}
Fix $s>0$.
For an integer $q \ge 0$,
suppose that $f$ has $q+2$ continuous derivatives on $(1, \infty)$ such that for some $t \ge 1$
	$$
	|f^{(r)}(x)| \asymp \frac{t}{x^{r+s}}
	$$
for $r=1, \dots, q+2$.
Then we have the following:
\begin{itemize}
\item[(1)]
For $\b=1$, define $\d_{q}=\frac{1}{q+2+s}$, $\D=\frac{1}{s}$, and the interval
$I_{q,s,t}:=(2^{\lfloor \log_2 t^{\d_q}\rfloor}, 2^{\lfloor \log_2 t^\D \rfloor}]$.
There exists a constant $C_{q,s}>0$ depending on $q$ and $s$ such that
\begin{equation*}
\left| \sum_{n \in I_{q,s,t}}\left(\frac{1}{n^\b}-\frac{1}{n^{2\b}}\right)\cos 2 \pi f(n)\right| \le C_{q,s}.
\end{equation*}
\item[(2)]
For $0 < \b < 1$, define $\d_{0}=\frac{1}{s+2}$, $\D=\frac{1}{s+1}$, and the interval
$I_+=(2^{\lfloor \log_2 t^{\d_0}\rfloor}, 2^{\lfloor \log_2 t^\D \rfloor}]$.
There exists a constant $C_{\b,s}>0$ depending on $\b$ and $s$ such that
\begin{equation*}
\left| \sum_{n \in I_+}\left(\frac{1}{n^\b}-\frac{1}{n^{2\b}}\right)\cos 2 \pi f(n)\right| \le C_{\b,s}t^{\frac{1-\b}{s+2}}. 
\end{equation*}
\end{itemize}
Here $\lfloor a \rfloor$ denotes the integer part of $a$.
\end{lemma}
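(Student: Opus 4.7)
Both statements follow from a dyadic application of the van der Corput $k$-th derivative test for exponential sums (see \cite{GK,KN}), with $k = q+2$. The first step is to split each interval $I$ into dyadic blocks $J_N := (N, 2N] \cap \Z$ with $N$ a power of $2$ between the two endpoints of $I$. On $J_N$ the weight $c_n := 1/n^\b - 1/n^{2\b}$ is monotone with $c_n \asymp N^{-\b}$ and $|c_n - c_{n+1}| = O(N^{-\b-1})$, so partial summation yields
$$
\left| \sum_{n \in J_N} c_n \cos 2\pi f(n) \right| \lesssim N^{-\b} \max_{N < M \le 2N} \left| \sum_{N < n \le M} e^{2\pi i f(n)} \right|.
$$

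Next, I would apply the $k$-th derivative test on $J_N$: since $|f^{(k)}(x)| \asymp \l_N$ with $\l_N := t/N^{k+s}$, van der Corput's estimate gives
$$
\max_{N < M \le 2N} \left| \sum_{N < n \le M} e^{2\pi i f(n)} \right| \lesssim N \l_N^{1/(2^k - 2)} + N^{1 - 2^{2-k}} \l_N^{-1/(2^k - 2)}.
$$
After substituting $\l_N = t/N^{k+s}$, dividing by $N^\b$, and summing over the dyadic scales, one obtains two geometric series in $N$. The endpoints are chosen precisely so that these series are bounded by the required quantities: the lower endpoint $N \asymp t^{1/(k+s)}$ is the transition where $\l_N$ becomes of order $1$, while the upper endpoint is where the second, ``lower-order'' van der Corput term attains a matching size. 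In part (1), with $\b = 1$, the two resulting sums telescope to $O_{q,s}(1)$; in part (2), with $k = 2$ and $0 < \b < 1$, a short computation shows that each term contributes at most $O_{\b,s}(t^{(1-\b)/(s+2)})$, with the first dominating at $N \asymp t^{1/(s+2)}$ and the second at $N \asymp t^{1/(s+1)}$.

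\textbf{The main obstacle} is the exponent bookkeeping in part (1). Because the dyadic range $[t^{\d_q}, t^\D]$ has length of order $\log t$, any residual positive exponent on $N$ or $t$ in either of the two terms would produce an unwanted factor of $t^\e$ or $\log t$. Verifying that $\d_q = 1/(q+2+s)$ and $\D = 1/s$ are precisely the endpoints for which both geometric series collapse to a constant depending only on $q$ and $s$ is the delicate step, and it is what allows the eventual choice of $q$ in terms of $\e$ and $s$ when Lemma \ref{Iq-sigma} is applied in the proof of Theorem \ref{fourier}.
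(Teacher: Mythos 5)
Your dyadic decomposition and the partial-summation step to strip out the weight $c_n = n^{-\b} - n^{-2\b}$ are exactly what the paper does, and your treatment of part (2) is essentially correct. The problem is in the exponential-sum input for part (1). You invoke ``the van der Corput $k$-th derivative test'' with $k = q+2$ and write the bound
\[
N\l_N^{1/(2^k-2)} + N^{1 - 2^{2-k}}\l_N^{-1/(2^k-2)}, \qquad \l_N = t/N^{k+s}.
\]
This is the classical test (Titchmarsh, GK Theorem 2.8), which assumes only a two-sided bound on the top derivative $f^{(k)}$. The paper instead uses GK Theorem 2.9, whose hypothesis is that \emph{all} derivatives $f^{(r)}$, $r = 1, \ldots, q+2$, satisfy $|f^{(r)}| \asymp F N^{-r}$; under that stronger hypothesis the second (``lower-order'') term improves dramatically to $F^{-1}N = t^{-1}N^{1+s}$. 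This is not a cosmetic difference: with $\b = 1$, dividing the paper's second term by $N$ and summing over dyadic $N$ up to $t^{1/s}$ gives exactly $t^{-1}\cdot t^{(1/s)\cdot s} = O(1)$, whereas your second term, after the same steps, contributes
\[
t^{-\frac{1}{4Q-2}}\, N^{-\frac{1}{Q} + \frac{q+2+s}{4Q-2}}\Big|_{N \asymp t^{1/s}} = t^{\frac{1}{s}\cdot\frac{Q(q-2)+2}{Q(4Q-2)}},
\]
which is a \emph{positive} power of $t$ once $q \ge 2$ (e.g.\ $t^{1/(28s)}$ for $q = 2$). So your two geometric series do not telescope to $O_{q,s}(1)$ for general $q$, and since the application in Theorem~\ref{fourier}(1) needs $q$ arbitrarily large to force $\d_q = 1/(q+2+s) < \e$, the bound in part (1) is not established. (For $q = 0, 1$ the two forms coincide, which is why the discrepancy is easy to miss; and since part (2) only uses $q = 0$, your argument there survives.) The fix is to cite the iterated Weyl--van der Corput process of Graham--Kolesnik, which combines $q$ rounds of Weyl differencing with a first-derivative (Kusmin--Landau) endgame, and use its $F^{-1}N$ error term; only then does the choice $\D = 1/s$ give a bounded contribution from the lower-order term.
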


We employ the following theorem in \cite{GK} to show the above Lemma \ref{Iq-sigma}.
This is the iterated version of \cite[Theorem 2.7, Chapter 1]{KN}.

\begin{theorem}[\cite{GK}, Theorem 2.9]\label{GK}
Let $q \ge 0$ be an integer. Suppose that $f$ has $q+2$ continuous derivatives on an interval $I \subset (N, 2N]$.
Suppose also that there is a constant $F$ such that
	\begin{equation*}
		|f^{(r)}(x)| \asymp FN^{-r}, \ x \in I,
	\end{equation*}
for $r=1, \dots, q+2$.
Then
	\begin{equation*}
		\left|\sum_{n \in I}e^{2\pi i f(n)}\right| \le c\left(F^{\frac{1}{4Q-2}}N^{1-\frac{q+2}{4Q-2}}+ F^{-1}N\right),
	\end{equation*}
where the $n$ runs over integers in $I$ in the above summation, $Q=2^q$ and $c$ is an absolute constant.
\end{theorem}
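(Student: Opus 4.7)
The plan is to prove the theorem by induction on $q \ge 0$, using van der Corput's classical pair of techniques: the second-derivative test (the B-process) as the base case, and Weyl--van der Corput differencing (the A-process) for the inductive step. The claimed exponent $1/(4Q-2)$ with $Q = 2^q$ is exactly what one obtains by composing the A-process $q$ times with the B-process, so the proof amounts to executing this composition carefully and tracking how the exponents evolve.

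For the base case $q = 0$ one has $Q = 1$ and the target is $|S| \le c(F^{1/2} + F^{-1}N)$, assuming $|f'(x)| \asymp F/N$ and $|f''(x)| \asymp F/N^2$ on $I \subset (N, 2N]$. Since $f''$ has constant sign (up to the $\asymp$), $f'$ is monotone; I partition $I$ into maximal subintervals on each of which $\lfloor f'(x)\rfloor$ is constant, so that the distance $\|f'(x)\|$ from $f'(x)$ to the nearest integer is monotone on each piece. On any piece where $\|f'\|$ stays bounded below by some $\lambda > 0$, the Kuzmin--Landau inequality gives $|\sum e^{2\pi i f(n)}| \le c/\lambda$; summing over the $O(F/N + 1)$ pieces yields the first term $F^{1/2}$ in the large-$F$ regime, while the one piece on which $f'$ crosses an integer is controlled by the trivial bound on its length (of order $N^2/F$, from $|f''| \asymp F/N^2$), producing the safety term $F^{-1}N$ that dominates when $F < N$.

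For the inductive step, I assume the theorem at level $q-1$ and apply Weyl--van der Corput with an integer parameter $1 \le H \le N$,
\begin{equation*}
|S|^2 \le \frac{N+H}{H}\Bigl(N + 2\sum_{h=1}^{H-1}\Bigl|\sum_{n} e^{2\pi i (f(n+h) - f(n))}\Bigr|\Bigr),
\end{equation*}
where the inner $n$-sum runs over $n$ with $n, n+h \in I$. For fixed $h \ge 1$ the shifted function $f_h(x) := f(x+h) - f(x)$ is $C^{q+1}$ on its interval with $|f_h^{(r)}(x)| \asymp hF/N^{r+1}$ for $r = 1, \dots, q+1$, by the mean value theorem applied to $f^{(r+1)}$. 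The induction hypothesis with parameters $(q-1, F', N)$ for $F' = hF/N$ therefore bounds each inner sum; summing over $h$, extracting the diagonal $h=0$ contribution $N$, taking a square root, and finally choosing $H$ to balance the two resulting terms yields the stated estimate.

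The principal technical obstacle is the bookkeeping of exponents. Tracking the A-process shows that the exponents $(\alpha, \beta)$ appearing in the main term $F^\alpha N^\beta$ transform by $(\alpha, \beta) \mapsto (\alpha/(2\alpha + 2), (\alpha + \beta + 1)/(2\alpha + 2))$; starting from the base-case values $(\alpha_0, \beta_0) = (1/2, 0)$ and using $Q_q = 2Q_{q-1}$, one verifies by induction (via $4Q_q - 2 = 2(4Q_{q-1} - 2) + 2$) that $\alpha_q = 1/(4Q-2)$ and $\beta_q = 1 - (q+2)/(4Q-2)$ form a fixed point of this recursion. A parallel computation, using the same optimal $H$, confirms that the secondary $F^{-1}N$ term is preserved through each iteration, provided one checks at every stage that $H$ lies in the admissible range $[1, N]$ and that boundary regimes (where $F$ is very small or $H$ saturates) are absorbed into the two explicit terms of the bound.
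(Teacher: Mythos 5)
First, a point of comparison: the paper does not prove this statement at all; it is quoted as a black box from \cite{GK} (Theorem 2.9), so the only benchmark is the standard proof in that book, which is exactly the A/B-process induction you outline. Your global strategy is the right one, and your exponent bookkeeping checks out: the differenced phase satisfies $|f_h^{(r)}(x)|\asymp hFN^{-r-1}$ by the mean value theorem, so the induction hypothesis applies with $F'=hF/N$, and the recursion $(\alpha,\beta)\mapsto\bigl(\alpha/(2\alpha+2),(\alpha+\beta+1)/(2\alpha+2)\bigr)$ together with $4Q_q-2=2(4Q_{q-1}-2)+2$ does reproduce $\alpha_q=1/(4Q-2)$, $\beta_q=1-(q+2)/(4Q-2)$.

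There is, however, a genuine gap at the base case, precisely where the secondary term $F^{-1}N$ must be produced. You control ``the one piece on which $f'$ crosses an integer'' by the trivial bound on its length, which you take to be of order $N^2/F$; but $N^2/F$ is larger than $F^{-1}N$ by a factor of $N$, and for $F<N$ it even exceeds the trivial bound $N$ and the claimed estimate (for $F=N^{2/3}$ it is $N^{4/3}$, while $F^{1/2}+F^{-1}N\asymp N^{1/3}$), so the bound does not follow as written; moreover there can be up to $O(F/N+1)$ near-integer pieces, not one. The correct mechanism uses the two-sided first-derivative hypothesis via a dichotomy: if $F\le cN$ for a suitably small $c$, then $0<c_1F/N\le |f'(x)|\le 1/2$ on all of $I$, so $f'$ stays away from every integer, $\|f'\|\ge c_1F/N$, and Kuzmin--Landau applied to the whole interval gives $\ll NF^{-1}$ with no splitting; if $F\ge cN$, your splitting with $\lambda\asymp F^{1/2}/N$ (near-integer pieces of length $\ll\lambda N^2/F$ each) gives $\ll F^{1/2}+NF^{-1/2}\ll F^{1/2}$. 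The same dichotomy, and not mere boundary bookkeeping, is also needed in the inductive step: the inherited secondary terms $(hF/N)^{-1}N$ summed over $h<H$ contribute $\asymp N^3\log H/(HF)$ to $|S|^2$, which is absorbed by the main term only when $F\gtrsim N$; for smaller $F$ you must again dispose of the whole sum by Kuzmin--Landau before differencing. With these repairs your outline becomes the standard, correct proof.
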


\begin{proof}[Proof of Lemma \ref{Iq-sigma}]
Let $t \ge 1$ be a constant appearing in the order of magnitude of $|f^{(r)}(x)|$.
For $0< \d_q <\D$, 
consider the interval $I_q=(2^{\lfloor \log_2 t^{\d_q}\rfloor}, 2^{\lfloor \log_2 t^{\D} \rfloor}]$.
Divide the interval $I_q$ as 
$$
I_q=\bigsqcup_{k=m_q}^{M-1}J_k
$$ 
where
	\begin{align*}
		m_q=\lfloor \log_2 t^{\d_q}\rfloor, \ &\ M=\lfloor \log_2 t^{\D} \rfloor, \\
		J_k =(2^k, 2^{k+1}], \ &\ m_q \le k \le M-1.
	\end{align*}

Applying Theorem \ref{GK} on each $I \subset J_k=(2^k, 2^{k +1}]$
with $N=2^k$,
	\begin{equation*}
		|f^{(r)}(x)|\asymp \frac{t}{N^{r+s}}, \ \ F=\frac{t}{N^s},
	\end{equation*}
for $r=1, \dots, q+2$, we have
	\begin{equation}\label{I}
		\left|\sum_{n \in I}e^{2\pi i f(n)}\right| \le c t^{\frac{1}{4Q-2}}N^{-\frac{s}{4Q-2}}N^{1-\frac{q+2}{4Q-2}}+c t^{-1}N^{1+s}.
	\end{equation}

By summation by parts on each $J_k$,
	\begin{equation*}\label{sum-parts}
		\sum_{n \in J_k}\left(\frac{1}{n^\b}-\frac{1}{n^{2\b}}\right)\cos2 \pi f(n) =\sum_{l=2^k+1}^{\infty}a_l		\sum_{n=2^k+1}^{l\wedge 2^{k+1}}\cos2 \pi f(n),
	\end{equation*}
where $a_l=\left(\frac{1}{l^\b}-\frac{1}{l^{2\b}}\right)-\left(\frac{1}{(l+1)^\b}-\frac{1}{(l+1)^{2\b}}\right)$,
and $x\wedge y=\min(x, y)$.
Taking $I=(2^k, l\wedge 2^{k+1}]\subset (N, 2N]$,
we have by (\ref{I}) that
		\begin{align*}
		\left|\sum_{n \in J_k} \left(\frac{1}{n^\b}-\frac{1}{n^{2\b}}\right)\cos 2 \pi f(n)\right|	
		&\le c_1N^{-\b}\left(t^{\frac{1}{4Q-2}}N^{1-\frac{q+2+s}{4Q-2}}+t^{-1}N^{1+s}\right)	\\
		&= c_1 t^{\frac{1}{4Q-2}}N^{1-\b-\frac{q+2+s}{4Q-2}}+c_1 t^{-1}N^{1+s-\b}	\\
		&= c_1 t^{\frac{1}{4Q-2}}2^{-k\left(-1+\b+ \frac{q+2+s}{4Q-2}\right)} + c_1 t^{-1}2^{k\left(1+s-\b\right)}.
	\end{align*}

Here, note that the exponents in the two ``$2^{\pm k}$"'s are positive:
	\begin{equation*}
		-1+\b+\frac{q+2+s}{4Q-2}=	
			\begin{cases}
				\frac{q+2+s}{4Q-2} > 0,	\ \text{when (1) $\b=1$ and $q\ge 0$}, \\
				\frac{2\b+s}{2}>0, 		\ \text{when (2) $0 < \b < 1$ and $q=0$},
			\end{cases}
	\end{equation*}
	and $1+s-\b >0$ for both cases (1) and (2).

Therefore, for every $t \ge 1$, with $2^{-m_q}\asymp t^{-\d_q}$ and $2^{M} \asymp t^\D$,
	\begin{equation}\label{eqlast}
	\begin{aligned}
		\sum_{k=m_q}^{M-1}\left|\sum_{n \in J_k} \left(\frac{1}{n^\b}-\frac{1}{n^{2\b}}\right)\cos 2\pi f(n)\right|	
		&\le c_2 t^{\frac{1}{4Q-2}}2^{-m_q\left(-1+\b+\frac{q+2+s}{4Q-2}\right)}+c_2 t^{-1}2^{M (1+s-\b)}	\\	
		&\le c_3 t^{\frac{1}{4Q-2}}t^{-\d_q \left(-1+\b+\frac{q+2+s}{4Q-2}\right)}+c_3 t^{-1}t^{\D\left(1+s-\b\right)},
	\end{aligned}
	\end{equation}
where the constant $c_3$ depends on $q$, $\b$ and $s$.
	
First, we show (1).
Let $\b=1$,
$\d_q=\frac{1}{q+2+s}$ and $\D=\frac{1}{s}$.
Then,
	\begin{equation*}
		\frac{1}{4Q-2}-\d_q\left(-1+\b+\frac{q+2+s}{4Q-2}\right)=0,
	\end{equation*}
and $-1+\D(1+s -\b)=0$.
Hence, the last sum in (\ref{eqlast}) is bounded from above by some constant $C_{q,s}$.

Next, we show (2).
Let $0 < \b < 1$, $\d_0=\frac{1}{s+2}$ and $\D=\frac{1}{s+1}$. 
When $q=0$,
it follows that
	\begin{equation*}
		\frac{1}{4Q-2}-\d_0\left(-1+\b+\frac{q+2+s}{4Q-2}\right)=\frac{1-\b}{s+2}>0,
	\end{equation*}
and $-1+\D(1+s -\b)=\frac{-\b}{s+1}< 0$.
Then we have the desired bound $C_{\b,s}t^{\frac{1-\b}{s+2}}$.
\end{proof}

\begin{proof}[Proof of Theorem \ref{fourier}]
We apply Lemma \ref{Iq-sigma} with $f(x)=\frac{t}{x^{s}}$.
First, we prove (1).
For all $\e>0$ such that $\e < \frac{1}{s}$, choose an integer $q$ satisfying that
$\d_{q}=\frac{1}{q+2+s}< \e$.
Then $\D -\d_q > \frac{1}{s}-\e>0$.
By Lemma \ref{Iq-sigma} (1),
\begin{align*}
\log |\hat \m(t)|^2 &\le \sum_{n \in I_q}\left(-\frac{2}{n}+\frac{2}{n^2}+2\left(\frac{1}{n}-\frac{1}{n^2}\right)\cos\frac{2 \pi t}{n^s}\right) \\
&\le -2(\D-\d_q)\log t + C_{\e,s},
\end{align*}
and we obtain the bound $\left(-\frac{2}{s}+2\e\right)\log t +C_{\e,s}$ for every $t \ge 1$;
hence we have
$
|\hat \m(t)| \le C_{\e,s} t^{-\frac{1}{s}+\e}.
$
For $0<s<2$ and $\e < \frac{1}{s} - \frac{1}{2}$, since $\hat \m$ is in $L^2$, the distribution has a density in $L^2$  by the Plancherel theorem \cite[Theorem 3.1, Chapter VI]{K}.
For $0<s<1$ and $\e < \frac{1}{s} - 1$, since $\hat \m$ is in $L^1$, the distribution $\m$ has a bounded and continuous density by the Fourier inversion formula.

Next, we show (2).
By Lemma \ref{Iq-sigma} (2),
\begin{align*}
\log |\hat \m(t)|^2 &\le \sum_{n \in I_+}\left(-\frac{2}{n^\b}+\frac{2}{n^{2\b}}+2\left(\frac{1}{n^\b}-\frac{1}{n^{2\b}}\right)\cos\frac{2 \pi t}{n^s}\right) \\
&\le \sum_{n \in I_+}-\frac{2}{n^\b}\left(1-\frac{1}{n^\b}\right)+C_{\b,s}t^{\frac{1-\b}{s+2}}.
\end{align*}
Since $\b<1$, the first term in the last line is bounded by
$-C_\b t^{\D(1-\b)}$.
Here $\D(1-\b)=\frac{1-\b}{s+1}$ in $(0,1)$, and this is greater than $\frac{1-\b}{s+2}$;
hence there exists $T$ such that for every $t \ge T$, we have the bound $-C_{\b,s} |t|^{\frac{1-\b}{s+1}}$.
Since $\hat \m$ decays faster than any polynomial, $\m$ has a smooth density.
\end{proof}

\begin{proof}[Proof of Corollary \ref{HausYoung}]
The decay of the Fourier transform implies that the density of $\m$ is in $C^r$ for $0 < s < 1/(r+1)$ and an integer $r \ge 0$.
We note that $\hat \m$ is in $L^p$ for every $p>s$ by Theorem \ref{fourier}(1).
For $1 \le s < 2$,
by the Hausdorff-Young inequality \cite[Theorem 3.2, Chapter VI]{K},
the density $f$ of $\m$ satisfies that
$
\|f\|_{L^q} \le \|\hat f\|_{L^p},
$
for every $s<p\le 2$, and the conjugate $q$ of $p$.
Therefore
the density of $\m$ is in $L^q$ for every $2 \le q < s/(s-1)$.
The density of $\m$ is a priori in $L^1$; by the inequality $\|f\|_{L^r}\le \|f\|_{L^p}^\th \|f\|_{L^q}^{1-\th}$ for $1\le p<r<q \le \infty$, where $\th=\left(\frac{1}{r}-\frac{1}{q}\right)/\left(\frac{1}{p}-\frac{1}{q}\right)$,
we see the density of $\m$ is in $L^q$ for every $1 \le q < s/(s-1)$ as well.
\end{proof}

Theorem \ref{Sinfty} follows from the same argument as Theorem \ref{fourier} and Corollary \ref{HausYoung}.

\begin{proof}[Proof of Theorem \ref{Sinfty}]
Fix $s=1$ and apply to Lemma \ref{Iq-sigma} (1) for $f(x)=-t\log\left(1-\frac{1}{x}\right)$.
Note that on $[2, \infty)$, we have $|f^{(r)}| \asymp \frac{t}{x^{r+1}}$ for $0 \le r \le q+2$.
It follows that the Fourier transform of the distribution $\m_\infty$ of $S_\infty$ satisfies the same estimate as the one for $\m$ in the case when $\b=1$ and $s=1$ in Theorem \ref{fourier}(1).
Then, as in Corollary \ref{HausYoung}, the distribution $\m_\infty$ has a density in $L^q$ for all $1 \le q < \infty$.
Since $V_\infty=e^{-S_\infty}$, the distribution of $V_\infty$ is absolutely continuous with respect to Lebesgue measure.
\end{proof}

\section{Completion of the proof of Theorem \ref{main}.}\label{ac}

We consider the case when $s>0$ and $\b=1$, namely,
$S:=\sum_{n=1}^{\infty}I_n/n^s$, where the $I_n$ are independent and $I_n=1$ with probability $1/n$ and $I_n = 0$ otherwise.

\begin{theorem}\label{all-s}
For every $s>0$ and $\b=1$, the distribution $\m$ of $S$ is absolutely continuous with respect to Lebesgue measure.
\end{theorem}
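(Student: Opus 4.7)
By the Jessen--Wintner law of pure types \cite{JW}, $\m$ is either absolutely continuous or purely singular. It is non-atomic since $\sum_n \Pb(I_n=1)=\sum_n 1/n=\infty$ forces, by the second Borel--Cantelli lemma, infinitely many $I_n=1$ almost surely, and any specific value of $S$ is then attained with probability zero. Consequently $\m$ is absolutely continuous if and only if it has any nontrivial absolutely continuous component. For $0<s<2$ Theorem~\ref{fourier}(1) already supplies such a component via the $L^2$ Fourier bound. The substantive case is $s\ge 2$, where $|\hat\m(t)|\le C_\e |t|^{-1/s+\e}$ fails to be in $L^2$ and Fourier inversion does not apply directly.

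My plan is to decompose $S$ as a sum of independent parts via a partition $\N=A\sqcup B$, with $S_A:=\sum_{n\in A}I_n/n^s$ and $S_B:=\sum_{n\in B}I_n/n^s$ independent. Conditioning on $\s(I_n:n\in B)$ makes the conditional law of $S$ a deterministic shift of the law of $S_A$; thus it suffices to find $A$ for which $S_A$ has an AC distribution, since $\m=\m_A*\m_B$ is then AC. Equivalently, Fubini over the conditioning sigma-field transfers conditional AC to AC of $\m$.

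The crux is the choice of $A$. A naive subset of $\N$ will not work, because Theorem~\ref{fourier}(1) applied to $S_A$ yields the same exponent $1/s$ and hence the same failure of $L^2$. The strategy is to take $A$ as a union of carefully placed dyadic blocks $B_{k_j}=[2^{k_j},2^{k_j+1})$ so that $\hat{\m_A}(t)=\prod_{n\in A}[\,1-1/n+(1/n)e^{-2\pi i t/n^s}\,]$ factors across independent frequency scales, and then to apply the van der Corput machinery of Lemma~\ref{Iq-sigma} scale by scale. The conditioning on $B$ removes the ``inactive'' part of the product from play so that, on every band $|t|\asymp 2^{ks}$, the accumulated decay from the active blocks in $A$ near the scale $n\asymp t^{1/s}$ is controlled by a conditional version of Theorem~\ref{GK} (van der Corput) tailored to that band.

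The main obstacle is that van der Corput applied to a single dyadic block provides no decay of $|\hat\m(t)|$ at that block's active frequency scale; the decay in the original proof of Theorem~\ref{fourier} came from summation-by-parts across many consecutive blocks. Recovering the required decay for $\hat{\m_A}$ after excising some blocks into $B$ requires a delicate calibration: $A$ must be dense enough around every active scale to supply cancellation, yet $B$ must retain enough independent randomness for the conditioning step to be nontrivial. Carrying out this balance quantitatively, so that the conditional Fourier transform lands in $L^2$ on almost every realization of the $I_n$ with $n\in B$, is the technical heart of Section~\ref{ac}. The argument yields absolute continuity but, as noted above in the excerpt, no further regularity for $s\ge 2$, reflecting the fact that the conditional $L^2$ bound produced by this method is essentially tight.
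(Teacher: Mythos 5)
Your opening paragraph is fine (pure types plus non-atomicity reduce the task to exhibiting a nontrivial absolutely continuous component, and the case $0<s<2$ is already settled by Theorem~\ref{fourier}(1)), but the conditioning scheme you propose for $s\ge2$ does not work, and it also misidentifies where the decay comes from.

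The gap is this. You propose a \emph{fixed} deterministic partition $\N=A\sqcup B$ and condition on $\s(I_n:n\in B)$. That conditioning merely shifts $S$ by the realized value of $S_B$, so the conditional characteristic function has modulus exactly $|\hat\m_A(t)|$, a product over $n\in A$ of factors $|1-\tfrac1n+\tfrac1n e^{-2\pi it/n^s}|$ that are each within $O(1/n)$ of~$1$. Removing indices can only weaken, not strengthen, the decay, so no choice of $A$ can push $\hat\m_A$ into $L^2$ when $s\ge2$; your ``delicate calibration'' has no parameter to calibrate, and the phrase ``lands in $L^2$ on almost every realization'' is vacuous because $|\hat\m_A|$ does not depend on the realization at all. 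What the paper actually does is condition on a \emph{random} coarsening: the sequence $(M_k)$ where $M_k=1$ iff exactly one $I_n=1$ in the dyadic block $(2^k,2^{k+1}]$. Given $M_k=1$, the conditional law of $Y_k$ is that of a single uniform-ish pick of $n$ from the block, so the conditional characteristic function of $Y_k$ is a genuine weighted exponential sum $\sum_{n}\frac{1}{z_k(n-1)}e^{2\pi it/n^s}$ --- not a product of near-unit factors. This is precisely what makes van der Corput bite: applied with $q=0$ to a \emph{single} block $k$ in the window $\tfrac{1}{\s_2}\log_2 t<k<\tfrac{1}{\s_1}\log_2 t$, it already yields $|\hat\m_k(t)|\le ct^{-\d}$. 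So your assertion that ``van der Corput applied to a single dyadic block provides no decay'' is wrong under the correct conditioning; on the contrary, the single-block decay is the whole point, and the role of the events $A_m=\bigcap_{l\ge m}B_l$ (with $B_l$ requiring some $M_k=1$ in a $\log$-length window) is just to guarantee, with probability arbitrarily close to $1$, that there are many such good $k$'s near scale $\log_2 t/s$ for all large $t$. Multiplying those polynomial decays produces super-polynomial decay of the conditional characteristic function, hence a smooth conditional density, and letting $\Pb(A_m)\to1$ gives absolute continuity. Note also that this argument does not aim at an $L^2$ bound at all; your closing sentence about a ``conditional $L^2$ bound'' being tight misreads what is proved.

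In short: the high-level idea of ``condition, then beat van der Corput on dyadic blocks'' is on the right track, but a deterministic partition and conditioning on $\s(I_n:n\in B)$ cannot change the shape of the conditional Fourier transform. The essential missing ingredient is conditioning on the indicator $M_k$ of ``exactly one success in block $k$,'' which converts a block's Fourier factor from a near-unimodular product into an exponential sum amenable to Theorem~\ref{GK}.
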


\begin{proof}
Fix $s>0$.
It suffices to show that there is a sequence $(A_m)$ of events with $\Pb(A_m) \to 1$ as $m \to \infty$, such that conditioned on $A_m$, the distribution of $S$ is absolutely continuous.
Indeed, this follows from the formula,
$$
\Pb(S \in B | A)=\frac{\Pb(\{S \in B\} \cap A)}{\Pb(A)} \ge \frac{\Pb(S \in B)-(1-\Pb(A))}{\Pb(A)},
$$
since if there is a set $B$ of Lebesgue measure $0$ with $\Pb(S \in B)>0$ then also
$\Pb(S \in B | A)>0$ when $\Pb(A)$ is sufficiently close to $1$.

Rewrite the series as
$$
S=1+\sum_{k=0}^{\infty}Y_k,
$$
where $Y_k$ is the portion of the sum taken on the $k$-th block $(2^k, 2^{k+1}]$, i.e.,
$$
Y_k=\sum_{n=2^k+1}^{2^{k+1}}\frac{I_n}{n^s}.
$$
Define a sequence of independent Bernoulli random variables $(M_k)$ by setting $M_k=1$ if and only if $I_n=1$ for exactly one $n$ in the $k$-th block.
It is straightforward to check that there exists some $p>0$ such that
$
\Pb(M_k=1)\ge p
$
for all $k$ (e.g.\ $p=\frac{1}{2}\log 2$).
Thus the sequence $(M_k)$ dominates an i.i.d.\ sequence of Bernoulli random variables with parameter $p$.
We proceed to define the events $A_m$.
Define a large constant $a$ by
$
a=2/\log\left(1/(1-p)\right).
$
Define a sequence of events $(B_m)$ by
$$
B_m=\bigcup_{m \le k \le m+\lfloor a\log m \rfloor}\{M_k=1\}.
$$
Define
$
A_m=\bigcap_{l \ge m}B_l.
$
To check that $\Pb(A_m) \to 1$ as $m \to \infty$, we need only notice that the choice of $a$ ensures that
$
\Pb(B_l^{\mathsf{c}}) \le 1/l^2
$
and we have
$
\Pb(A_m^{\mathsf{c}}) \le \sum_{l=m}^{\infty}\Pb(B_l^{\mathsf{c}}),
$
where $A^{\mathsf{c}}$ denotes the complement of $A$.

From now on fix $m$ and condition on the entire sequence $(M_k)$, at a sample point where the event $A_m$ is satisfied.
Observe that the random variables $(Y_k)$ are still independent under this conditioning.
For a $k$ such that $M_k=1$ the remaining randomness in $Y_k$ is exactly which $n$ is the single $n$ in the $k$-th block for which $I_n=1$.
This $n$ is distributed in the block according to the probabilities $1/(z_k(n-1))$, where $z_k$ is a normalising constant which tends to $\log 2$ as $k$ grows, and $z_k \ge \log 2$.

Write $\hat \m_M(t)$ for the Fourier transform of the distribution of $S$ conditioned on a sample sequence $M=(M_k)$, 
and $\hat \m_k(t)$ for the Fourier transform of the distribution of $Y_k$ conditioned on $M_k$.
Here we define $\hat \m_M(t)$ as the Fourier transform of the regular conditional probability given $M=(M_k)$.
(See e.g., \cite[Chapter 4.3]{B} on the existence of a regular conditional probability.)
The Fourier transform of the distribution of $S$ conditioned on $A_m$ is given by
\begin{equation}\label{Fcond}
\frac{1}{\Pb(A_m)}\int_{A_m}\hat \m_{M(\o)}(t)\Pb(d\o).
\end{equation}
We have
$
\hat \m_M(t)=e^{2\pi i t}\prod_{k=0}^{\infty}\hat \m_k(t)
$
for $\Pb$-almost every sequence $M=(M_k)$,
and $|\hat \m_k(t)|\le 1$ for all $k$ and $t$.
If $k$ is such that $M_k=1$, then we have
$$
\hat \m_k(t)=\sum_{n=2^k+1}^{2^{k+1}}\frac{1}{z_k(n-1)}\exp{\frac{2 \pi i t}{n^s}}.
$$
Apply Theorem \ref{GK} in the case where
$q=0$, $N=2^k$,
$f(x)=t/x^s$ and $F=t/N^s$, then for $I \subset (N, 2N]$,
we have
\begin{equation*}
\left|\sum_{n \in I}e^{2 \pi if(n)}\right| \le c\left(t^{\frac{1}{2}}N^{-\frac{s}{2}}+t^{-1}N^{1+s}\right),
\end{equation*}
where the constant $c$ is absolute.
By summation by parts on $(N, 2N]$ as in the proof of Lemma \ref{Iq-sigma},
using $z_k\ge \log2$,
we have
\begin{align*}
\left|\hat \m_k(t)\right|=\left|\sum_{n=N+1}^{2N}\frac{1}{z_k(n-1)}\exp{\frac{2 \pi i t}{n^s}}\right|
&\le cN^{-1}\left(t^{\frac{1}{2}}N^{-\frac{s}{2}}+t^{-1}N^{1+s}\right) \\
&\le c t^{\frac{1}{2}}2^{-k\left(\frac{s}{2}+1\right)}+ct^{-1}2^{ks}.
\end{align*}
Fix some $\s_1$ and $\s_2$ such that $s<\s_1<\s_2<s+2$ (e.g., $\s_1=s+0.1$ and $\s_2=s+1.9$).
Then for $k$ such that $\frac{1}{\s_2}\log_2 t<k<\frac{1}{\s_1}\log_2 t$, we have
\begin{equation*}
\frac{1}{2}\log_2 t-k\left(\frac{s}{2}+1\right) < -\frac{s+2-\s_2}{2\s_2}\log_2 t=-\d_2 \log_2 t
\end{equation*}
and
\begin{equation*}
-\log_2 t+ks< -\left(1-\frac{s}{\s_1}\right)\log_2 t=-\d_1\log_2 t,
\end{equation*}
where $\d_1$ and $\d_2$ are positive and depend only on $s$, $\s_1$ and $\s_2$.
Define $\d:=\min(\d_1, \d_2)>0$.
To summarize,
if $k$ is such that $M_k=1$ and $\frac{1}{\s_2}\log_2 t<k<\frac{1}{\s_1}\log_2 t$,
then we have
\begin{equation}\label{poly}
\left|\hat \m_k(t)\right| \le ct^{-\d}
\end{equation}
for every $t\ge 1$, where the constant $c$ is absolute.

By the definition of $A_m$, the number of $k$ in the interval
$
\left[\frac{1}{\s_2}\log_2 t, \frac{1}{\s_1}\log_2 t\right]
$
having $M_k=1$ tends to infinity as $t$ tends to infinity.
Since (\ref{poly}) holds for all $k$ in this interval having $M_k=1$ and since there are more and more of these as $t$ grows,
we conclude that $\hat \m_M(t)$, as a product of all $\hat \m_k(t)$, decays faster than 
$
(c|t|^{-\d})^L
$
in $t$ for all $L>0$.
This decay of $\hat \m_M(t)$ holds uniformly on $A_m$ modulo $\Pb$-measure null set.
Therefore (\ref{Fcond}) decays faster than polynomially in $t$.
This proves that conditioned on $A_m$, the distribution of $S$ is smooth, in particular, absolutely continuous,
and concludes the proof.
\end{proof}

\begin{theorem}\label{unbounded}
For every $s>1$ and $\b=1$,
the density of $\m$ is unbounded on every interval in its support.
\end{theorem}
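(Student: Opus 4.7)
The plan is to show that the density $f$ of $\m$ is essentially unbounded on every open interval $(a,b)\subseteq\mathrm{supp}(\m)$. Call $x_0\in\R$ \emph{attainable} if $x_0=1+\sum_{n=2}^N a_n/n^s$ for some $N\ge 1$ and some $a_2,\ldots,a_N\in\{0,1\}$; note that $I_1=1$ almost surely since $1/1^\b=1$. The two ingredients are (i) a density-spike statement saying $f$ is unbounded in every right-neighborhood of each attainable $x_0$, and (ii) the fact that attainable points are dense in $\mathrm{supp}(\m)$. For (ii), fix any sub-interval $(c,d)\subset(a,b)$; then $\m((c,d))>0$, and the truncation $S_N:=1+\sum_{n=2}^N I_n/n^s$ (always attainable) differs from $S$ by at most $\sum_{n>N}1/n^s$. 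Choosing $N$ so large that this is less than some small $\eta>0$, with positive probability $S_N\in(c-\eta,d+\eta)\subset(a,b)$. Hence one can pick an attainable $x_0\in(a,b)$ with a right-neighborhood $(x_0,x_0+\rho)\subseteq(a,b)$.

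For (i), fix such an $x_0$ with corresponding $a_2,\ldots,a_N$ and let $E=\{I_n=a_n:2\le n\le N\}$, $p_0:=\Pb(E)>0$. Set $T:=\sum_{n>N}I_n/n^s$; $T$ is independent of $E$ and on $E$ one has $S=x_0+T$. Since the conditional law $\m_E$ of $S$ given $E$ satisfies $\m_E\le\m/p_0$ and $\m$ is absolutely continuous by Theorem \ref{all-s}, $\m_E$ is absolutely continuous, so $T$ has some density $g$ and $f(x_0+t)\ge p_0\,g(t)$ for a.e.~$t$. The heart of the argument is a lower bound on $\Pb(T\le c/M^s)$ for $c=1+2/s$ and large $M>N$: restricting to the event $\{I_n=0\text{ for all }N<n\le M\}$ gives a telescoping factor $\prod_{n=N+1}^{M}(1-1/n)=N/M$, and Markov's inequality applied to the remaining tail $\sum_{n>M}I_n/n^s$ (whose mean is at most $\sum_{n>M}n^{-s-1}\le 1/(sM^s)$) ensures that this tail is below $(c-1)/M^s$ with probability at least $1/2$. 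Combining,
\begin{equation*}
\int_0^{c/M^s} g(t)\,dt \;=\; \Pb\!\left(T\le \tfrac{c}{M^s}\right) \;\ge\; \frac{N}{2M}.
\end{equation*}

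If $g$ were essentially bounded by some $B$ on a fixed right-neighborhood $[0,\rho_0]$ of $0$, then for all sufficiently large $M$ the left-hand side would be at most $Bc/M^s$, forcing $M^{s-1}\le 2Bc/N$, in direct contradiction with the hypothesis $s>1$. Thus $g$ is essentially unbounded on every right-neighborhood of $0$, and the inequality $f(x_0+t)\ge p_0\,g(t)$ transfers this to show that $f$ is essentially unbounded on $(x_0,x_0+\rho)\subseteq(a,b)$, completing the proof. The main obstacle to watch is the quantitative gap in the key estimate: the lower bound $N/M$ (provided by the telescoping product) is polynomially larger than the upper bound $Bc/M^s$ that a bounded density would impose, and it is precisely the condition $s>1$ that forces these two bounds to be incompatible.
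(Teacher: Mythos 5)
Your proof is correct and takes essentially the same approach as the paper's: split the series at indices $N$ and $M$, use the telescoping product $\prod_{n=N+1}^{M}(1-1/n)=N/M$ to get positive probability of the middle block being zero, bound the tail by Markov's inequality, and derive a contradiction with a bounded density from $s>1$. The only stylistic difference is that you route the argument through the density $g$ of the tail $T$ (invoking Theorem~\ref{all-s}), whereas the paper argues directly from the lower bound $\Pb\bigl(S\in[x,x+2C/M^s]\bigr)\ge c'_x/M$ for $x$ in the dense set $\bigcup_N D_N$.
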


\begin{proof}
Let $D_N=\{\sum_{n=1}^N \e_n/n^s \ | \ \text{$\e_n=0$ or $1$}\}$ be the set of all possible values of the sums up to the $N$-th term in the series $S$.
Note that $\bigcup_N D_N$ is dense in the support of $\m$.
For every $M>N$, we write the series as the sum of three independent random variables: $S=S_1+S_2+S_3$, where
$$
S_1:=\sum_{n=1}^N I_n/n^s, \ S_2:=\sum_{n=N+1}^M I_n/n^s, \text{ and } S_3:=\sum_{n=M+1}^\infty I_n/n^s.
$$
We have $\Pb\left(S_1=x\right)=c_x>0$ for each $x \in D_N$, and
$\Pb\left(S_2=0\right)=\prod_{n=N+1}^{M}\left(1-1/n\right)\ge C/M$.
Since $\E S_3 \le C/M^s$, we have
$\Pb\left(S_3 \le 2C/M^s\right) \ge 1/2$ by Markov's inequality.
Therefore we obtain
$\Pb\left(S \in [x, x+ 2C/M^s]\right) \ge c'_x/M$ for some constant $c'_x>0$ depending only on $x$ and for arbitrary large $M$.
Thus, for every $s>1$, the density is unbounded on every interval in its support.
\end{proof}

\begin{remark}\label{sharp}
We see the sharpness of the estimate of Fourier transform $\hat \m(t)$ in part 1 of Theorem \ref{main} for $1<s<2$.
Since $\m$ has a density in $L^q$ for all $2 \le q < s/(s-1)$ by Corollary \ref{HausYoung},
H\"older's inequality gives that
$
\Pb\left(S \in [x, x+\e]\right)=o(\e^{1/p}),
$
as $\e \to 0$,
for every $s<p \le 2$.
This is sharp.
Indeed, in the above argument,
for each $x$ in the union of the $D_N$, and for $\e=2C/M^s$,
we have
$
\Pb\left(S \in [x, x+\e]\right) \ge c_x\e^{1/s}.
$

\end{remark}

\section{Singularity for the prime numbers sequence.}\label{primes}

\begin{proof}[Proof of Theorem \ref{sing}]
Let $S:=S_{primes}$.
Fix $0<\e<s$.
Decompose the series into three parts:
$S=S_1+S_2+S_3$, where
$$
S_1:=\sum_{p \le N^\e}\frac{I_p}{p^s}, \
S_2:=\sum_{N^\e < p < N}\frac{I_p}{p^s}, \ \text{and} \
S_3:=\sum_{N \le p}\frac{I_p}{p^s}.
$$
Since
$\E S_3=\sum_{N\le p}1/p^{1+s} \le C/N^s$ for some constant $C$,
we have
$\Pb(S_3\ge 2C/N^s)\le 1/2$ by Markov's inequality.
Hence $\Pb(S_3\le 2C/N^s)\ge 1/2$.

Let us show that the probability that $S_2=0$ is bounded away from below by a positive constant independent of $N$.
We use Mertens' theorem,
\begin{equation}\label{Mertens}
\prod_{p <N}\left(1-\frac{1}{p}\right)=\frac{e^{-\g+o(1)}}{\log N},
\end{equation}
where $\g$ is the Euler-Mascheroni constant (\cite[Theorem 429]{HW} and \cite{M}).
Then we have
$$
\Pb(S_2=0)=\prod_{N^\e<p<N}\left(1-\frac{1}{p}\right)=\e e^{o(1)} \ge \e c_1,
$$
where $c_1$ is absolute.

Fix a positive integer $m< N^\e$.
Define the sequence $\{a_p(m)\}_{p<N^\e}$ such that
$a_p(m)$ is $1$ if $p|m$, and $0$ otherwise.
Let $x_m:=\sum_{p<N^\e}a_p(m)/p^s$.
We claim that the probability that $S_1$ coincides with some $x_m$ is bounded away from $0$ by a constant independent of $N$.
We consider the event $A_m:=\{I_p=a_p(m) \ \text{for all $p<N^\e$}\}$.
Notice that the $A_m$ are not disjoint (e.g., $A_2=A_4$), but $A_m$ for square-free $m$'s are disjoint.
Then,
\begin{align*}
\Pb\left(S_1=x_m \ \text{for some $m < N^\e$}\right)
	&\ge \Pb\left(\bigcup_{m<N^\e}A_m\right) \\
	&=\sum_{m<N^\e, \text{$m$ is square-free}}\Pb(A_m) \\
	&=\sum_{m<N^\e}|\m(m)|\prod_{p|m}\frac{1}{p-1}\prod_{p<N^\e}\left(1-\frac{1}{p}\right),
\end{align*}
where $\m(m)$ is the M\"obius function, i.e.,
\begin{itemize}
\item[(i)] $\m(1)=1$,
\item[(ii)] $\m(m)=0$ if $m$ has a squared factor,
\item[(iii)] $\m(p_1p_2\cdots p_k)=(-1)^k$ if all the primes $p_1, p_2, \dots, p_k$ are different.
\end{itemize}
The last sum is bounded from below by
\begin{equation}\label{lb}
\sum_{m<N^\e}\frac{|\m(m)|}{m}\prod_{p<N^\e}\left(1-\frac{1}{p}\right)
\end{equation}
since $\prod_{p|m}p \le m$.
Note that the number of square-free numbers up to $x$ grows linearly in $x$; more precisely,
\begin{equation*}\label{square-free}
\sum_{m \le x}|\m(m)|=\frac{6}{\pi^2}x +O\left(\sqrt{x}\right)
\end{equation*}
by \cite[Theorem 334]{HW}.
By summation by parts, we have that
\begin{equation*}
\sum_{m \le N^\e}\frac{|\m(m)|}{m}=\frac{6}{\pi^2}(1+o(1))\log N^\e.
\end{equation*}

This and Mertens' theorem (\ref{Mertens}) imply that the sum (\ref{lb}) is at least some positive constant $c_2$ which is independent of $N$ and $\e$.

Let us define the set $B_N$ as a union of intervals
$\left[x_m, x_m+2C/N^s\right]$ for $m < N^\e$.
Since $S_1$, $S_2$ and $S_3$ are independent,
we have $\Pb(S \in B_N) \ge c_1c_2\e/2$.
On the other hand, the Lebesgue measure of $B_N$ is at most $2CN^{\e-s}$, which tends to $0$ as $N \to \infty$.
Hence the distribution cannot be absolutely continuous.
Recall that the distribution is continuous if and only if 
$$
\sum_{p}\left(1-\sup_{x \in \R}\Pb\left(\frac{I_p}{p^s}=x\right)\right)=\infty
$$
\cite[Lemma 1.22]{Ell}.
It is satisfied since $\sum_p 1/p=\infty$ (e.g., \cite[Theorem 427]{HW}).
The Jessen-Wintner law of pure type \cite{JW} implies that the distribution is non-atomic singular.
\end{proof}

\begin{remark}\label{gen}
By a straight forward adaptation to the above proof,
Theorem \ref{sing} is generalized as follows:
Let $(a_p)_{p; primes}$ be a sequence of real numbers such that
\begin{itemize}
\item[(i)] $\sum_{p, a_p\neq 0}\frac{1}{p}=\infty$,
\item[(ii)] $\sum_{p}\frac{|a_p|}{p}<\infty$,
\item[(iii)] $\sum_{p>x}\frac{|a_p|}{p}=o\left(x^{-c}\right)$ for some $c>0$,
\end{itemize}
and $(I_p)$ be an independent sequence with value $0$ or $1$ with probability $1-1/p$ or $1/p$, respectively.
Then $S=\sum_p a_pI_p$ converges almost surely, and its distribution is non-atomic singular.
\end{remark}

\section{Further questions}\label{questions}

Below we list some natural questions about the critical case when $s=1$ and $\b=1$.
Let $\b=1$.
By part 1 of Theorem \ref{main}, for $0 < s < 1$ the distribution $\m$ has a bounded continuous density,
while for $s>1$ it has an unbounded density.
In the case when $s=1$,
the density of $\m$ is in $L^q$ for every $1 \le q<\infty$ by Corollary \ref{HausYoung}.
In fact, we conjecture the following.

\begin{conjecture}
For $s =1$ and $\b=1$, the density is bounded.
\end{conjecture}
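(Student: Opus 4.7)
The plan is to strengthen the Fourier bound of Theorem \ref{fourier}(1) from $|\hat\m(t)| \le C_\e |t|^{-1+\e}$ to a logarithmically improved bound of the form $|\hat\m(t)| \le C|t|^{-1}(\log|t|)^{-1-\y}$ for some $\y>0$, which by Fourier inversion would give $\m$ a bounded (indeed continuous) density.

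Starting from
$$
\log |\hat\m(t)|^2 = \sum_{n=1}^{\infty}\log\left(1-\frac{2}{n}+\frac{2}{n^2}+2\left(\frac{1}{n}-\frac{1}{n^2}\right)\cos\frac{2\pi t}{n}\right),
$$
a Taylor expansion in $1/n$ yields $\log|\hat\m(t)|^2 \approx -2\sum_n (1-\cos(2\pi t/n))/n$ to leading order. The sum over $n\gtrsim t$ contributes $O(1)$ since $1-\cos(2\pi t/n)\asymp t^2/n^2$ there, while the sum over $1 \ll n \ll t$ averages to $\approx 2\log t$, heuristically explaining the borderline rate $|\hat\m(t)|\asymp 1/t$. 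I would aim to upgrade this lower bound on $-\log|\hat\m(t)|^2$ by an additive $(2+2\y)\log\log t$; this reduces to showing that the oscillatory sum $\sum_{n \le t}\cos(2\pi t/n)/n$ exhibits cancellation strictly better than the trivial $O(\log t)$ bound, which would be pursued via a sharper van der Corput analysis than in Lemma \ref{Iq-sigma} at the critical scale $n \asymp t$, combined with summation by parts. The main obstacle will be the near-resonance ranges where $t/n$ is close to an integer, since these contribute the bulk of the main term and the standard derivative hypotheses of van der Corput degrade in such ranges.

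As a backup, I would pursue the block-conditioning scheme used in the proof of Theorem \ref{all-s}. For every $k$ with $M_k=1$, one has $|\hat\m_k(t)| \le ct^{-\d}$ uniformly for $k$ in an interval of length $\asymp \log t$ around $\log_2 t$, and a large-deviation estimate for $\sum_k M_k$ over this interval shows that, for typical $M$, a logarithmic number of factors contribute simultaneously to $\hat\m_M(t) = \prod_k \hat\m_k(t)$; this yields super-polynomial decay in $t$ and hence a uniformly bounded conditional density $f_M$ on an event $A_m$ with $\Pb(A_m) \to 1$. The hard part here is controlling the contribution of atypical $M$-sequences to $f(x) = \E[f_M(x)]$, since a small-probability but sizeable $\|f_M\|_\infty$ could in principle produce unbounded mass at a specific point $x$. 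Making this second plan work would require an integrable-in-$M$ tail bound on $\|f_M\|_\infty$, which ultimately appears to demand essentially the same sharpening of the Fourier estimate sought in the first approach.
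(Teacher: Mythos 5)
This statement is labeled as a \emph{conjecture} in Section~\ref{questions}; the paper does not prove it, and indeed poses it as an open problem alongside the companion question of whether the density is discontinuous. So there is no paper proof to compare against, and your text is a plan rather than a proof. More importantly, your primary plan cannot work.

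Plan~1 seeks a bound $|\hat\m(t)|\le C|t|^{-1}(\log|t|)^{-1-\y}$ so as to conclude $\hat\m\in L^1$ and hence boundedness (and continuity) by Fourier inversion. But $\hat\m$ is \emph{not} in $L^1$ here, for an elementary structural reason that no sharpening of the van der Corput estimate can overcome. Since $I_1\equiv1$, the support of $\m$ is $[1,\infty)$, and the mass near the left endpoint is of order $\e$: on one hand $\{S<1+1/N\}\subseteq\{I_n=0 \text{ for } 2\le n\le N\}$ gives $\Pb(S<1+1/N)\le\prod_{n=2}^{N}(1-1/n)=1/N$, and on the other hand, since $\E\sum_{n>N}I_n/n\le 1/N$, Markov's inequality gives
\[
\Pb\bigl(S\le 1+2/N\bigr)\ \ge\ \Pb\bigl(I_n=0,\ 2\le n\le N\bigr)\cdot\Pb\Bigl(\textstyle\sum_{n>N}I_n/n\le 2/N\Bigr)\ \ge\ \frac{1}{2N}.
\]
Thus $\Pb(S\in[1,1+\e])\asymp\e$. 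If $\hat\m$ were in $L^1$, the density would agree a.e.\ with a continuous function $g$ vanishing on $(-\infty,1)$, forcing $g(1)=0$ and hence $\Pb(S\in[1,1+\e])=o(\e)$, a contradiction. So no true Fourier estimate places $\hat\m$ in $L^1$, and the near-resonance difficulty you flag is not a technical obstacle to be overcome but a sign that the target statement of Plan~1 is false. (This same computation, incidentally, shows the density has a jump at the boundary of its support, so the $L^1$ route is incompatible with what one can see directly.)

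Plan~2 is, in principle, not blocked by this obstruction, because conditioning decouples the boundary-jump effect from the decay of the individual $\hat\m_M$. But as you yourself acknowledge, you would need an integrable tail bound on $\|f_M\|_\infty$ over $A_m$ (or, better, a pointwise bound on $\E[f_M(x)\mathbf{1}_{A_m}]$ uniform in $x$), and you do not supply one; absent that, there is no argument to assess. In short: Plan~1 is ruled out, Plan~2 is an unfinished sketch, and the conjecture remains open.
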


We also ask the following question about the critical case.

\begin{question}
For $s=1$ and $\b=1$, is the density discontinuous?
\end{question}

\textbf{Acknowledgements.}
We thank Hiroko Kondo for making beautiful pictures, and an anonymous referee for reading the paper carefully, for correction of the proof of Theorem \ref{sing}, for Remark \ref{gen} and for additional references.
R.T.\ thanks the Theory Group of Microsoft Research where this work was initiated for its kind hospitality, and Kouji Yano for helpful discussions.

\end{document}